\newcommand{\calS}{\mathcal{S}}
\newcommand{\Om}{\Omega}
\newcommand{\sub}{\subset}
\newcommand{\utilde}{\widetilde{u}}
\newcommand{\calF}{\mathcal{F}}
\newcommand{\calD}{\mathcal{D}}
\newcommand{\f}[2]{\frac{#1}{#2}}
\newcommand{\kl}[1]{\left(#1\right)}
\newcommand{\na}{\nabla}
\newcommand{\io}{\int_{\Omega}}
\newcommand{\bdry}{\big|_{\partial \Omega}}
\newcommand{\delny}{\partial_{\nu}}
\newcommand{\Lom}[1]{L^{#1}(\Omega)}
\newcommand{\omn}{\omega_{N}}
\newcommand{\norm}[2][]{\|#2\|_{#1}}
\newcommand{\normm}[2]{\|#2\|_{#1}}
\newcommand{\nn}{\nonumber}
\newcommand{\set}[1]{\{#1\}}
\newcommand{\Stilde}{\widetilde{S}}
\newcommand{\Ombar}{\overline{\Omega}}
\newcommand{\Tmax}{T_{max}}
\newcommand{\uehat}{\widehat{u}_{η}}
\newcommand{\vehat}{\widehat{v}_{η}}
\newcommand{\Dbar}{\overline{D}}
\newcommand{\uhat}{\widehat{u}}
\newcommand{\vhat}{\widehat{v}}
\newcommand{\ue}{u_{η}}
\newcommand{\ve}{{v_{η}}}
\newcommand{\re}{r_{η}}
\newcommand{\gh}{\frac{γ}2}
\newtheorem{theorem}{Theorem}[section]
\newtheorem{lemma}[theorem]{Lemma}
\newtheorem{corollary}[theorem]{Corollary}
\theoremstyle{definition}
\newtheorem{remark}[theorem]{Remark}
\title{Infinite time blow-up of many solutions to a general quasilinear parabolic-elliptic Keller--Segel system}
\author{Johannes Lankeit\thanks{Institut f\"ur Mathematik, Universit\"at Paderborn, Warburger Str.100, 33098 Paderborn, Germany; email: johannes.lankeit@math.uni-paderborn.de}}
\date{}
\begin{document}
\maketitle
\begin{abstract}
\noindent We consider a parabolic-elliptic chemotaxis system generalizing 
\begin{align*}
  u_t&=\nabla\cdot((u+1)^{m-1}\nabla u)-\nabla \cdot(u(u+1)^{\sigma-1}\nabla v)\\
  0&= \Delta v - v + u
\end{align*}
in bounded smooth domains $\Om\subset ℝ^N$, $N\ge 3$, and with homogeneous Neumann boundary conditions. We show that 
\begin{itemize}
\item solutions are global and bounded if $σ<m-\f{N-2}N$
\item solutions are global if $σ\le 0$
\item close to given radially symmetric functions there are many initial data producing unbounded solutions if $σ>m-\f{N-2}N$. 
\end{itemize}
In particular, if $σ\le 0$ and $σ>m-\f{N-2}N$, there are many initial data evolving into solutions that blow up after infinite time.\\

\noindent \textbf{Keywords:} Keller-Segel; chemotaxis; infinite time blow-up; unboundedness; global existence\\
\noindent \textbf{Math Subject Classification (2010):} 35B44, 92C17, 35Q92, 35A01, 35K55
\end{abstract}
 
\section{Introduction}
Whereas diffusion has an equilibrating effect, cross-diffusive terms appearing in chemotaxis models like
\begin{align}\label{eq:firstsystem}
 u_t&=\nabla\cdot((u+1)^{m-1}\nabla u)-\nabla \cdot(u(u+1)^{\sigma-1}\nabla v)\\
  0&= \Delta v - v + u\nn
\end{align}
tend to lead to the exact opposite, to aggregation.
It is therefore of interest to characterize which of these mechanisms is more decisive for the solution behaviour, in dependence on their relative strengths as given by the size of the exponents $m$ and $\sigma$ in \eqref{eq:firstsystem}. Are all solutions global and bounded? Do some solutions blow up? If so, in finite or in infinite time? 
Indeed, there are studies showing for the related parabolic-parabolic chemotaxis system (where $0$ in the second equation of \eqref{eq:firstsystem} is replaced by $v_t$) that for different choices of $m$ and $\sigma$, any of these qualitatively different behaviours can be observed. One of the main tools for proving the existence of unbounded solutions is the use of an energy functional together with the construction of suitable initial data $u_0$, $v_0$ -- which makes this one of the few respects in which the fully parabolic system is easier to deal with than the parabolic-elliptic ``simplification'' \eqref{eq:firstsystem}. After all, there it is possible to choose $u_0$ and $v_0$ independently of each other, whereas in \eqref{eq:firstsystem} only $u_0$ can be selected, providing us with much less freedom for the construction. 
It is the parabolic--elliptic setting we are going to consider here; mainly being interested in $\sigma\le 0$.\\ 

Before we do so, let us briefly recall some of the known results in related models: 

We will begin with the \textbf{parabolic-parabolic model} 
\begin{align}\label{eq:pp}
u_t&=\nabla\cdot(D(u)\nabla u)-\nabla \cdot(S(u)\nabla v)\\
  v_t&= \Delta v - v + u \nn
\end{align}
in bounded domains $\Om\sub ℝ^N$ with no-flux boundary conditions and for easier comparison state the results for $D(u)=(u+1)^{m-1}$ and $S(u)=u(u+1)^{\sigma-1}$ or $S(u)=u^\sigma$ and let \[\alpha=m-σ-1,\] so that $\f{D}S$ approximately has the form of $u^\alpha$. 
If $m=1=\sigma$, then \eqref{eq:pp} becomes the classical Keller--Segel model, where it is known that all solutions exist globally and are bounded if $N=1$ (\cite{osaki_yagi}), for $N=2$, smallness of the initial mass $\io u(\cdot,0)$ is sufficient to guarantee boundedness (\cite{gajewski_zacharias,nagai_senba_yoshida_tmi}),  
whereas for large initial mass in $N=2$ (\cite{horstmann_wang,senba_suzuki}, cf. also \cite{herrero_velazquez_97}) 
and any mass in $N\ge 3$ (\cite{win_aggregationvs}) there are initial data leading to unbounded solutions. 
It has also been shown that this blow-up occurs within finite time for a ``large'' set of (radially symmetric) initial data (\cite{win_buhigherdim,mizoguchi_winkler_13}).

%
%
%


Retaining linear diffusion ($m=1$) but varying $\sigma$, it turns out that $-\alpha<\f2N$ leads to global existence, but on the other hand if $-\alpha$ is slightly larger than $\f2N$ then there are unbounded radially symmetric solutions. \cite{horstmann_winkler} 
If both diffusion and sensitivity are allowed to be nonlinear, it is the same condition distinguishing global existence from possible blow-up: 
If $-α>\f2N$ then there are unbounded solutions (\cite{win_volumefilling}), 
whereas complementarily \cite{tao_winkler} asserts global boundedness under the condition that $-α<\f2N$. (For analogous boundedness and blow-up results in a two-species model see \cite{tian_zheng}.)
Also, it was for models of this kind 
that the convexity assumption on domains often used in earlier works on chemotaxis models was removed in  \cite{ishida_seki_yokota}, where again $-α<\f2N$ is the condition ensuring global boundedness of the solutions. 


If $\Omega$ is 1-dimensional and $\sigma=1$, the solutions also remain bounded in the case $α=-2=-\f2N$ as has been shown very recently in \cite{cieslak-fujie-17-2}. 

In the presence of logistic source terms, one condition ensuring global existence again is $-α<\f2N$ -- another would be sufficient strength of the consumptive part of the logistic source (for precise conditions refer to \cite{xie_xiang,wang_mu_zheng_14jde}); blow-up results have not been obtained. 


The case of degenerate diffusion ($D(u)=u^{m-1}$ instead of $D(u)=(u+1)^{m-1}$) requires additional technical care (and restrictions such as $m\ge1$), but finally the same conditions on $α$ are recovered, for boundedness (\cite{ishida_yokota_12,sugiyama07,sugiyama_kunii}) as well as for blow-up (\cite{ishida_ono_yokota, ishida_yokota}). \\

%
%

%


With the exception of \cite{win_buhigherdim}, the works mentioned up to this point do not help in distinguishing blowup in finite time from that occuring after infinite time. Building on the method of \cite{win_buhigherdim}, in \cite{cieslak_stinner_12} Cieslak and Stinner showed that finite-time blowup occurs if $N\ge 3$: $σ\ge 1$, $m\in ℝ$, $-α>\f2N$. Results pertaining to 2-dimensional domains can be found in \cite{cieslak_stinner_14}. The more recent extension \cite{cieslak_stinner_15} of \cite{cieslak_stinner_12} showed finite-time blow-up if 
$-α>\f2N$, $m\ge 1$ $N\ge 3$ 
or $σ>\f2N, -α>\f2N$ (and $m\ge1$ if $σ<1$) 
and infinite-time blow-up under the condition that $\f2N<-α<\f2N-σ$. Similarly, solutions blow-up after infinite time if $σ\le  0$ and $-α>\f2N$ (\cite{win_itbu}). These papers also show that blow-up occurs for ``many'' initial data. For a different class of diffusivity and sensitivity functions, consult  \cite{winkler_17}, which gives conditions ensuring blow-up in infinite time for $D$ and $S$ being of exponential type.\\

Another relative of \eqref{eq:firstsystem} is the \textbf{further simplified system} 
\begin{align}\label{eq:jl}
 u_t&=\nabla\cdot(D(u)\nabla u)-\nabla \cdot(S(u)\nabla v)\\
  0&= \Delta v -\io u_0 + u.\nn
\end{align}
It has the convenient property that the analysis can be performed on a single scalar parabolic equation for the cumulated mass $w(r,t)=\int_0^r ρu(ρ,t) dρ$, which -- in contrast to \eqref{eq:firstsystem} -- is accessible for comparison arguments.  
For the classical case of 2-dimensional domains, $m=\sigma=1$, Jäger and Luckhaus (\cite{jaeger_luckhaus}) thereby showed existence of radially symmetric initial data such that $u$ explodes in the center of the domain after finite time. On the other hand, solutions rising from initial data with small mass exist globally  (\cite{calvez_carrillo}).

If $\sigma=1$ and $m$ is such that $-α<\f2N$, then solutions exist globally and are bounded, whereas $-α>\f2N$ may incur blow-up within finite time (\cite{cieslak_winkler_08}). If $\sigma$ may vary and $m\le 1$, \cite{winkler_djie} again asserts boundedness under the condition that $-α<\f2N$ and the possibility of blow-up if $-α>\f2N$, provided $σ>0$. 


Albeit only in higher dimensions ($N\ge 5$) and for small $k>1$, in \eqref{eq:jl} with additional source $+u-μu^k$, finite-time blow-up was shown despite the logistic growth restriction in \cite{zheng_mu_hu} by extension of \cite{win_blowup_higherdim_despite_loggrowth} if $m\in(0,2-\f2N)$ and $σ\in(1,\f{2+2m}3)$ 
(\cite[Theorem 1.2]{zheng_mu_hu}). Again, largeness of $μ$ and $k$ or $-α<\f2N$ ensure global existence. \\




As to the \textbf{parabolic-elliptic system \eqref{eq:firstsystem}}, the only available blow-up results deal with the classical model with $m=σ=1$, where finite-time blow-up has been shown to occur in two-dimensional domains for radial initial data with sufficiently large mass that are concentrated in the sense that their second moment is small \cite{nagai_senba_98,nagai01}, or in higher dimensional domains, where a higher moment seemed decisive \cite{nagai_senba_98} (for a corresponding result in $\Om=ℝ^N$ and condition on the second moment see \cite{calvez_corrias_ebde}). 

On the other hand, for other choices of $m$ and $σ$, $-α<\f2N$ again ensures global boundedness (see \cite[Thm 5.3]{cieslak_moralesrodrigo} for $N=3$, \cite{xie_xiang,zheng} for a general system also including logistic source terms, or \cite{wang_16} for a closely related parabolic-elliptic-elliptic attraction-repulsion system). \\

\textbf{Results.} 
For functions 
\begin{equation}\label{eq:DSpower}
 D(u)=(u+1)^{m-1}, \qquad S(u)=u(u+1)^{σ-1}
\end{equation}
in 
\begin{subequations}\label{eq:system}
\begin{align}
 u_t&=\nabla\cdot(D(u)\nabla u) - \nabla\cdot(S(u)\nabla v)\label{ueq} &&\qquad \text{in } \Om\times(0,\Tmax)\\
 0&= \Delta v - v +u \label{veq} &&\qquad \text{in } \Om\times(0,\Tmax)\\
 &u(\cdot,0)=u_0 &&\qquad \text{in } \Om \nn\\
 &\delny u\bdry=\delny v\bdry=0 &&\qquad \text{in } (0,\Tmax)\nn
\end{align}
\end{subequations}
we will attempt to characterize, which exponents spawn which kind of solution behaviour. Slightly generalizing $D$ and $S$ if compared to \eqref{eq:DSpower}, we will assume that 
\begin{equation}\label{condonDandS}
 D,S \in C^1(ℝ) \; \text{are such that } D(ξ)>0 \text{ for all } ξ\ge 0, \; S(ξ)>0\quad \text{for all } ξ>0, 
\end{equation}
and will usually assume that, in addition, with $c_D>0$, $C_S>0$, 
\begin{equation}\label{Dgeq}
 D(u)\ge c_D(u+1)^{m-1} \qquad \text{for } u>0,
\end{equation}
and
\begin{equation}\label{Sleq}
 S(u)\le C_S u(u+1)^{σ-1} \qquad \text{for } u>0.
\end{equation}
Defining 
\begin{equation}\label{defG}
 G(u):= \int_1^u\int_1^s \f{D(ξ)}{S(ξ)} dξds, \qquad u\in(0,∞), 
\end{equation}
we will furthermore assume 
\begin{equation}\label{Gleq}
 G(ζ)\le C_G(1+ζ^{2+α})
\end{equation}
for some $α\inℝ$ and all $ζ>0$ 
(which is consistent with the assumption that $\f DS\approx u^{\alpha}$ from the first part of the introduction and, in the case of \eqref{eq:DSpower} is satisfied with $α=m-σ-1$).

Our first result will then be to recover the conditions for global existence and boundedness of solutions:
\begin{theorem}\label{thm:bdness} Let $\Om\subsetℝ^N$, $N\ge 1$, be a bounded domain with smooth boundary. 
 Let $\sigma\inℝ$ and $m\inℝ$ satisfy
\begin{equation}
  σ<m-\f{N-2}N \label{eq:sigmasmaller.intro}
\end{equation}
and assume that $D$ and $S$ fulfil \eqref{condonDandS} as well as \eqref{Dgeq} and \eqref{Sleq} with some $c_D>0$ and $C_S>0$. Then for every $β\in(0,1)$ and every nonnegative function $u_0\in C^{β}(\Ombar)$ the solution $(u,v)$ to \eqref{eq:system} exists globally and is bounded. 
\end{theorem}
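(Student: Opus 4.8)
The plan is to derive an a priori bound for $\|u(\cdot,t)\|_{L^p(\Om)}$ for sufficiently large $p$, uniformly in $t$, and then to invoke standard parabolic regularity (semigroup estimates / Moser iteration) to upgrade this to an $L^\infty$-bound; global existence then follows from the extensibility criterion for the local solution of \eqref{eq:system}. The heart of the matter is the $L^p$-estimate. First I would test \eqref{ueq} with $(u+1)^{p-1}$ and integrate by parts to obtain
\begin{align*}
 \f1p\f{d}{dt}\io (u+1)^p &= -(p-1)\io (u+1)^{p-2}D(u)|\na u|^2 + (p-1)\io (u+1)^{p-2}S(u)\na u\cdot\na v.
\end{align*}
Using \eqref{Dgeq} the first (good) term is bounded above by $-c_D(p-1)\io (u+1)^{p+m-3}|\na u|^2$, which I would rewrite as $-c'\io|\na (u+1)^{(p+m-1)/2}|^2$. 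For the cross term, using \eqref{Sleq} I would bound $S(u)|\na u|\le C_S(u+1)^\sigma|\na u|$ and then substitute $\na v$ via the elliptic equation \eqref{veq}: integrating by parts once more moves the derivative off $v$, producing terms involving $\io (u+1)^{p+\sigma-2}\,u$ and $\io (u+1)^{p+\sigma-1}\,u$ (after using $-\Delta v = u - v$ and $0\le v$), i.e.\ essentially $\io (u+1)^{p+\sigma-1}$ up to harmless lower-order pieces and the elliptic $L^q$–$W^{2,q}$ estimate controlling $\na v$ by $u$.

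The decisive step is then to absorb the cross term into the good dissipation term using a Gagliardo–Nirenberg inequality. Setting $w:=(u+1)^{(p+m-1)/2}$, the good term is $-c'\|\na w\|_{L^2}^2$ and the bad term is (a constant times) $\|u+1\|_{L^{p+\sigma-1}}^{p+\sigma-1}$, which in terms of $w$ is $\|w\|_{L^{2(p+\sigma-1)/(p+m-1)}}^{2(p+\sigma-1)/(p+m-1)}$. The condition $\sigma<m-\f{N-2}{N}$ is exactly what makes the Gagliardo–Nirenberg interpolation exponent on $\|\na w\|_{L^2}$ strictly less than $2$, so that after Young's inequality the bad term is dominated by $\tfrac12 c'\|\na w\|_{L^2}^2$ plus a term controlled by a power of $\|w\|_{L^2}^2$ (equivalently a power of $\io(u+1)^{p+m-1}$, which reduces to a lower $L^r$-norm of $u$, handled inductively or by taking $p$ large enough that $p+m-1$ relates back to $p$). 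Adding $\io(u+1)^p$ to both sides and using $\|\na w\|_{L^2}^2 + \|w\|_{L^2}^2 \gtrsim \|w\|_{L^{2p/(p+m-1)}}^2$ (again Gagliardo–Nirenberg, crucially with the same smallness of exponent), one arrives at an ODI of the form $y'\le C - c\,y^\theta$ with $\theta>1$ or $\theta=1$, which forces $y(t)=\io(u+1)^p$ to stay bounded.

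I expect the main obstacle to be bookkeeping the exponents so that the Gagliardo–Nirenberg exponents genuinely land in the admissible range precisely under \eqref{eq:sigmasmaller.intro}, and in particular handling the borderline algebra when $N=1,2$ (where the Sobolev embedding is more generous and the condition \eqref{eq:sigmasmaller.intro} is mild) versus $N\ge 3$ uniformly; one must also be slightly careful that $p$ can be chosen large enough for the scheme to close while still yielding the $L^\infty$-bound via a final regularity step. The passage from $L^p$ to $L^\infty$ is routine once the uniform $L^p$-bound is in hand: one applies $L^q$-$L^\infty$ smoothing estimates for the Neumann heat semigroup to the variation-of-constants formula for $u$, using that $\na v$ is controlled in $L^\infty$ (or suitable $L^q$) by $u$ in $L^p$ via elliptic regularity, a Moser-type iteration or a direct bootstrap then giving $\sup_t\|u(\cdot,t)\|_{L^\infty}<\infty$, whence $\Tmax=\infty$.
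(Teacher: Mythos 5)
Your proposal follows essentially the same route as the paper: testing \eqref{ueq} with $(u+1)^{p-1}$, converting the cross term via $-\Delta v=u-v\le u$ into a bad term of the form $\io (u+1)^{p+\sigma}$ (note that your phrase ``essentially $\io (u+1)^{p+\sigma-1}$'' drops the extra factor of $u$ from $\io(u+1)^{p+\sigma-1}u$; the correct bad term is $\io(u+1)^{p+\sigma}$, and it is for \emph{this} exponent that \eqref{eq:sigmasmaller.intro} is exactly the Gagliardo--Nirenberg threshold), then two Gagliardo--Nirenberg/Young absorptions --- one against the dissipation, one to recover a power of $\io(u+1)^p$ from the gradient term with the conserved $L^1$-norm as the low-order quantity --- an elementary ODE comparison, and a Moser-type $L^p$-to-$L^\infty$ step. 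This is precisely the paper's chain of lemmas (differential inequality, the two interpolation estimates, the Gr\"onwall-like lemma, and the $L^p\Rightarrow L^\infty$ lemma), so the proposal is correct up to that minor bookkeeping slip.
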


This will be the consequence of a differential inequality for $\io u^p$, a small change in which can also be used to show global existence of solutions for nonpositive $σ$: 
\begin{theorem}\label{thm:GEifsigmanegative} 
Let $\Om\subsetℝ^N$, $N\ge 1$, be a bounded domain with smooth boundary.
 Let $m\in ℝ$, $σ\le 0$, $β\in(0,1)$. Assume that $D$, $S$ satisfy \eqref{condonDandS} and \eqref{Dgeq}, \eqref{Sleq} with some $c_D>0$ and $c_S>0$. Then for every $0\le u_0\in C^{β}(\Ombar)$, the solution $(u,v)$ to \eqref{eq:system} exists globally. 
\end{theorem}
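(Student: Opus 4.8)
The plan is to revisit the $L^p$ testing procedure underlying Theorem~\ref{thm:bdness} and to exploit that, once $\sigma\le 0$, the chemotactic sensitivity is bounded: by \eqref{Sleq} and since $\xi(\xi+1)^{\sigma-1}\le 1$ for $\xi>0$ (because $\sigma-1\le -1$), we have $S(\xi)\le C_S$ for all $\xi>0$. This will make the cross-diffusive contribution to $\f{d}{dt}\io u^p$ controllable by $\io u^p$ itself, so that — unlike in Theorem~\ref{thm:bdness} — the dissipation term is not needed at all, and no restriction on $N$, $m$ or the sharpness of $D$ is incurred. The price is that the resulting bound will grow in $t$, which is precisely why only global existence, and not boundedness, can be obtained in this way.

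First I would recall from the local existence theory that \eqref{eq:system} possesses a nonnegative classical solution $(u,v)$ on a maximal interval $[0,\Tmax)$ — here $v\ge0$ because $-\Delta v+v=u\ge0$ by \eqref{veq} and the maximum principle — subject to the extensibility alternative that $\Tmax=\infty$ or $\limsup_{t\to\Tmax}\normm{L^\infty(\Om)}{u(\cdot,t)}=\infty$; it therefore suffices to bound $\normm{L^\infty(\Om)}{u(\cdot,t)}$ on finite time intervals. Fixing $p\ge2$, multiplying \eqref{ueq} by $u^{p-1}$, integrating by parts (the boundary integrals vanish by $\delny u\bdry=\delny v\bdry=0$), writing $u^{p-2}S(u)\na u=\na\Phi_p(u)$ with $\Phi_p(\xi):=\int_0^\xi s^{p-2}S(s)\,ds$, and inserting \eqref{veq} in the form $\Delta v=v-u$, I expect to arrive at
\begin{align*}
 \f1p\f{d}{dt}\io u^p=-(p-1)\io u^{p-2}D(u)|\na u|^2-(p-1)\io\Phi_p(u)\,v+(p-1)\io\Phi_p(u)\,u.
\end{align*}
The first term on the right is nonpositive since $D>0$ by \eqref{condonDandS}, and the second is nonpositive since $\Phi_p\ge0$ and $v\ge0$; both I would simply discard. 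For the last one, $S\le C_S$ gives $\Phi_p(\xi)\le\f{C_S}{p-1}\xi^{p-1}$, hence $(p-1)\io\Phi_p(u)\,u\le C_S\io u^p$. The decisive point is the bookkeeping: the factor $p-1$ from the test function cancels the $\f1{p-1}$ produced by integrating $s^{p-2}$, leaving $\f{d}{dt}\io u^p\le pC_S\io u^p$ with a constant linear in $p$, so that Grönwall's lemma yields $\io u^p(\cdot,t)\le e^{pC_St}\io u_0^p$ on $[0,\Tmax)$.

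Finally I would take $p$-th roots: since $u_0\in C^{\beta}(\Ombar)\subset L^\infty(\Om)$ and $C_S$ is independent of $p$,
\begin{align*}
 \normm{L^p(\Om)}{u(\cdot,t)}\le e^{C_St}\normm{L^p(\Om)}{u_0}\le e^{C_St}\normm{L^\infty(\Om)}{u_0}\,|\Om|^{1/p},
\end{align*}
and letting $p\to\infty$ produces $\normm{L^\infty(\Om)}{u(\cdot,t)}\le e^{C_St}\normm{L^\infty(\Om)}{u_0}$ for all $t\in[0,\Tmax)$. Thus $u$ stays bounded on every finite time interval, and (after a routine parabolic bootstrap from this $L^\infty$ bound, should the extensibility criterion be phrased in a stronger norm) the extensibility alternative forces $\Tmax=\infty$. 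I do not expect a serious obstacle here — the argument is short — the only points needing care are the $p$-uniformity of the constant $C_S$ (without it the passage $p\to\infty$ would break, and one would instead fall back on a Moser iteration, which incidentally would still close for arbitrary $m$ precisely because the drift is bounded) and the nonnegativity of $v$, which is what permits dropping the term $-(p-1)\io\Phi_p(u)v$.
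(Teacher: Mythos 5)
Your proposal is correct, and while it rests on the same core observation as the paper --- that for $σ\le 0$ the chemotactic contribution to $\f{d}{dt}\io u^p$ can be absorbed into $\io u^p$ itself, yielding at most exponential growth --- your endgame is genuinely different. The paper fixes one sufficiently large $p$, obtains $y'\le Cy$ for $y=c_1+\io(u+1)^p$ from its Lemma \ref{lem:diffineq} (keeping but not using the dissipation term), and then passes from $L^p$ to $L^\infty$ via Lemma \ref{lem:pimpliesinfty}, i.e.\ via elliptic regularity and the Moser-type iteration of \cite[Lemma A.1]{tao_winkler}. You instead track the $p$-dependence of the Gr\"onwall constant: since $S\le C_S$ uniformly (valid because $ξ(ξ+1)^{σ-1}\le 1$ for $σ\le0$), the estimate $\norm[\Lom p]{u(\cdot,t)}\le e^{C_St}\norm[\Lom p]{u_0}$ holds with a rate independent of $p$, so letting $p\to\infty$ gives the $L^\infty$ bound directly and the extensibility criterion \eqref{extcrit} (which is indeed phrased in $\norm[\infty]{u(\cdot,t)}$, so no further bootstrap is needed) closes the argument. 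Your route is more self-contained (no Moser iteration, no elliptic regularity beyond $v\ge0$), does not actually use \eqref{Dgeq} but only $D>0$, avoids the constraint $p>1-σ$, and produces the explicit bound $\norm[\Lom\infty]{u(\cdot,t)}\le e^{C_St}\norm[\Lom\infty]{u_0}$; the paper's route is shorter only because Lemmas \ref{lem:pimpliesinfty} and \ref{lem:diffineq} are already in place for Theorem \ref{thm:bdness}. All the individual steps you flag as needing care (nonnegativity of $v$, $\Phi_p(ξ)\le\f{C_S}{p-1}ξ^{p-1}$, the vanishing boundary terms, the $p$-uniformity of $C_S$) do go through as you describe.
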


The most exciting part, however, will be the detection of unbounded solutions. Here we will rely on 
\begin{equation}\label{defF}
 \calF(u,v):=\f12\io|\na v|^2 + \f12\io v^2 -\io uv +\io G(u),\qquad (u,v)\in C^0(\Ombar)\times C^1(\Ombar).
\end{equation}

which has been known to be an energy functional for \eqref{eq:pp} and \eqref{eq:firstsystem} for a long time (see \cite{nagai_senba_yoshida_ge,gajewski_zacharias,biler,win_buhigherdim}) 
and lies at the core of unboundedness results in the parabolic-parabolic setting (\cite{horstmann_winkler,win_buhigherdim,cieslak_stinner_12,cieslak_stinner_14,cieslak_stinner_15}, see above), where it is known that initial data $(u_0,v_0)$ with sufficiently negative energy $\calF(u_0,v_0)$ yield unbounded solutions, if $S$ and $D$ satisfy 
\begin{equation}\label{eq:winvolumefilling2.11}
 \int_{s_0}^s \f{τD(τ)}{S(τ)} dτ\le \f{N-2-δ}N \int_{s_0}^s\int_{s_0}^{σ} \f{D(τ)}{S(τ)} dτdσ + Ks \quad \text{for all } s\ge s_0,
\end{equation}
with some $δ>0$, $s_0\ge 1$ and $K\ge 0$. 

\begin{remark}
 Condition \eqref{eq:winvolumefilling2.11} is, in particular, satisfied if $u^{β}\f{D(u) }{S(u)} \to c_0>0$ as $u\to \infty$ for some $β>\f2N$ (\cite[Cor. 5.2]{win_volumefilling}). If $D(u)=(u+1)^{m-1}$ and $S(u)=u(u+1)^{σ-1}$, then $β=-α$. 
\end{remark}

In stark contrast to the parabolic-parabolic case, in our search for suitable initial data, we will have to ensure that $u_0$ and $v_0$ ``fit''. (Since no initial data for $v$ are part of \eqref{eq:system}, we have to define $v_0$ by $0=\Delta v_0-v_0+u_0$, but are at least justified in using these functions by Lemma \ref{lem:unbd}.) The corresponding construction will be what Section \ref{sec:construct} will be devoted to. 

Not satisfied with having found one function $u_0$ that leads to blow-up, we will then proceed to show that there are actually ``many'' choices of initial data with this property:

\begin{theorem}\label{thm:unbounded}
 Let $\Om=B_R\subset ℝ^N$, $N\ge 3$. Let $S$, $D$ be such that \eqref{condonDandS}, $S(0)=0$ and \eqref{eq:winvolumefilling2.11} with some $s_0>0$, $K>0$, $δ>0$ are satisfied and that $G$ as defined in \eqref{defG} satisfies \eqref{Gleq} with some $α\inℝ$ and $C_G>0$.  If 
\( -α>\f2N\), 
the following holds:\\
 Let $p\in[1,\f{2N}{N+2})$ if $α\le -\f4{N+2}$ and $p\in[1,-\f{αN}2)$ if $α> -\f4{N+2}$. 
 Given radially symmetric $u_0\in C^{β}(\Ombar)$ for some $β>0$, there are radially symmetric functions $\ue, \ve$ such that $0=\Delta \ve-\ve +\ue$, $\delny\ve\bdry=0$ for any $η\in(0,1)$, and 
\[
 \norm[\Lom p]{\ue-u_0} \to 0 \qquad \text{as } η\searrow 0.
\]
 and that the solutions to \eqref{eq:system} for these initial data $u(\cdot,0):=\ue$ blow up. 
\end{theorem}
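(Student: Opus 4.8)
The plan is to exploit the energy functional $\calF$. Since here $\ve$ is forced to be $(-\Delta+1)^{-1}\ue$ (unlike in the parabolic--parabolic setting, where $v_0$ could be prescribed independently), the whole energy reduction has to be engineered through $\ue$ alone; the key is that for any pair solving the elliptic equation one has, testing \eqref{veq} with $\ve$, the identity $\calF(\ue,\ve)=-\f12\io\ue\ve+\io G(\ue)$, which reduces the task to making $\io\ue\ve$ large relative to $\io G(\ue)$. Concretely, I would first record that, by Lemma~\ref{lem:unbd} (together with \eqref{condonDandS}, $S(0)=0$ and \eqref{eq:winvolumefilling2.11}), there is a threshold below which negativity of the energy forces the corresponding solution of \eqref{eq:system} to be unbounded; hence it suffices to produce, for each $η\in(0,1)$, a nonnegative radial $\ue$ that is Hölder continuous on $\Ombar$, with $\ve:=(-\Delta+1)^{-1}\ue$ satisfying $0=\Delta\ve-\ve+\ue$ and $\delny\ve\bdry=0$, such that $\norm[\Lom p]{\ue-u_0}\to0$ and $\calF(\ue,\ve)\to-\infty$ as $η\searrow0$.

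For the construction (this is what Section~\ref{sec:construct} carries out) I would fix a radial profile $ρ\colon[0,\infty)\to[0,1]$ with $ρ\equiv1$ on $[0,1]$, $ρ\equiv0$ on $[2,\infty)$, smooth on $[0,2)$, and vanishing at $2$ to a fixed small positive Hölder order $k$ (small enough that $\int_{B_2}ρ(|z|)^{2+α}\,dz<\infty$ when $2+α<0$); and I would fix an exponent $s$ with
\[\max\Bigl\{\tfrac{N+2}{2},\,-\tfrac{2}{α}\Bigr\}<s<\tfrac{N}{p},\]
which is possible \emph{precisely} because $N\ge3$, $-α>\f2N$ and $p$ lies in the stated range (the case distinction $α\lessgtr-\f{4}{N+2}$ in the theorem being exactly that of which of the two lower bounds for $s$ is active). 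With $A_η:=η^{-s}$ and $\phi_η(x):=A_ηρ(|x|/η)$ I then put $\ue:=u_0+\phi_η$ and $\ve:=(-\Delta+1)^{-1}\ue$. Then $\ue$ is radial, nonnegative and (since $\phi_η$ is supported in $B_{2η}\Subset\Om$ for small $η$) of class $C^{\min\{β,k\}}(\Ombar)$, $\ve$ is radial and of class $C^1(\Ombar)$, and $\norm[\Lom p]{\ue-u_0}^p=A_η^p\io ρ(|x|/η)^p\,dx\le C\,η^{N-sp}\to0$ because $sp<N$.

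The heart of the matter is the energy estimate. Applying the identity above to $(\ue,\ve)$ and to $(u_0,v_0)$ (with $v_0:=(-\Delta+1)^{-1}u_0$), writing $\ve=v_0+\psi_η$ with $\psi_η:=(-\Delta+1)^{-1}\phi_η$, and using self-adjointness of $(-\Delta+1)^{-1}$,
\[\calF(\ue,\ve)=\calF(u_0,v_0)-\io \phi_η v_0-\tfrac12\io \phi_η\psi_η+\int_{B_{2η}}\!\bigl(G(\ue)-G(u_0)\bigr).\]
Here $\calF(u_0,v_0)$ is a fixed finite number; $\bigl|\io \phi_η v_0\bigr|\le\norm[\Lom1]{\phi_η}\,\norm[\Lom\infty]{v_0}\le C\,η^{N-s}\to0$; and, using \eqref{Gleq}, $G\ge0$, and that $G$ is decreasing on $(0,1]$ (so that $G(\ue)\le G(u_0)$ where $\ue\le1$), the last term is $\le C\,η^{N}$ if $2+α\le0$ and $\le C(η^{N}+η^{N-s(2+α)})$ if $2+α>0$. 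For the decisive term, nonnegativity of the Neumann Green's function $\Gamma$ of $-\Delta+1$ on $B_R$ together with its lower bound $\Gamma(x,y)\ge c|x-y|^{2-N}$ for $|x-y|$ small yield $\psi_η(x)\ge c\,A_η\int_{B_η}|x-y|^{2-N}\,dy\ge c'A_ηη^{2}$ for $x\in B_{η/2}$, whence $\io \phi_η\psi_η\ge\int_{B_{η/2}}\phi_η\psi_η\ge c''A_η^{2}η^{N+2}=c''η^{N+2-2s}$, which tends to $+\infty$ since $s>\f{N+2}{2}$. As moreover $η^{N-s}\to0$ and, thanks to $s>-\f{2}{α}$, $η^{N-s(2+α)}=o(η^{N+2-2s})$, we get $\calF(\ue,\ve)\le\calF(u_0,v_0)-\tfrac{c''}{4}η^{N+2-2s}\to-\infty$.

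Hence for all sufficiently small $η$ the energy $\calF(\ue,\ve)$ lies below the threshold from Lemma~\ref{lem:unbd}, so the solution of \eqref{eq:system} with $u(\cdot,0)=\ue$ is unbounded; and since the construction starts from an \emph{arbitrary} radial $u_0$, these blow-up data are ``many''. I expect the main obstacle to be exactly the energy estimate of the previous paragraph: the \emph{lower} bound $\io \phi_η\psi_η\gtrsim η^{N+2-2s}$ requires the near-diagonal behaviour of the Neumann Green's function of $-\Delta+1$ on the ball (not merely of $-\Delta$), and one has to keep the gain $η^{N+2-2s}$ strictly ahead of the loss $η^{N-s(2+α)}$ from $\io G(\ue)$ while simultaneously keeping $A_η^{p}η^{N}\to0$ -- it is this threefold balance that forces $p$ into the interval in the statement. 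The remaining points -- Hölder regularity and nonnegativity of $\ue$, radial symmetry and $C^{1}$-regularity of $\ve$, finiteness of $\io G(\ue)$, and the choice of the vanishing order $k$ of $ρ$ when $2+α<0$ -- are routine.
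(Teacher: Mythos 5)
Your overall strategy coincides with the paper's: reduce to Lemma \ref{lem:unbd}, superimpose on $u_0$ a radial bump of height $\eta^{-s}$ and width $\eta$, and win the competition between $\io \ue\ve\gtrsim \eta^{N+2-2s}$ and the cost terms; your threefold exponent balance $\max\{\f{N+2}2,-\f2α\}<s<\f Np$ is exactly the paper's choice of its parameter $γ$ in Lemma \ref{lem:Funbounded}. The one genuinely different ingredient is your lower bound on $\io\phi_\eta\psi_\eta$ via the near-diagonal behaviour of the Neumann Green's function of $-\Delta+1$; the paper instead integrates the radial ODE twice to get an explicit representation of $\ve$ and reads the bound off from that. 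Your route is shorter but imports standard-but-unproved facts (positivity of the Neumann Green's function of $-\Delta+1$ on the ball and the estimate $\Gamma(x,y)\ge c|x-y|^{2-N}$ uniformly for $x,y$ near the centre); the paper's is longer but entirely self-contained. Either works.

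There is, however, one genuine gap. You take $\ue=u_0+\phi_\eta$ and treat $\calF(u_0,v_0)$ as ``a fixed finite number'', and you list ``finiteness of $\io G(\ue)$'' among the routine points. When $2+α<0$ this can fail: \eqref{Gleq} only bounds $G$ from above by $C_G(1+\zeta^{2+α})$, and the hypotheses \eqref{condonDandS}, $S(0)=0$ permit $G(\zeta)\to+\infty$ as $\zeta\searrow 0$ (e.g.\ $D\equiv 1$ and $S(\xi)=\xi^3$ near $0$ gives $G(\zeta)\sim\f1{2\zeta}$, compatible with \eqref{Gleq} for $α\le-3$). For such $D,S$ and, say, $u_0\equiv 0$, one has $\ue=\phi_\eta=0$ on $\Om\setminus B_{2\eta}$, hence $\io G(\ue)=+\infty$ and $\calF(\ue,\ve)=+\infty$ for every $\eta$ --- so your initial data simply do not have negative energy, and no choice of $s$ repairs this. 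Your device of making the profile $\rho$ vanish to small Hölder order only controls the contribution of the bump's own zero set inside $B_{2\eta}$, not the region where $u_0$ itself vanishes. The paper closes exactly this hole by defining $\widehat u_\eta=u_0+\ue+\eta^q$ with a small $q>0$ chosen so that $q(2+α)$ still exceeds the blow-up exponent of the good term (and $\eta^q\to0$ in $\Lom p$); the uniform lower bound $\widehat u_\eta\ge\eta^q$ then gives $\io G(\widehat u_\eta)\le C_G|\Om|(1+\eta^{q(2+α)})$. You need this (or an equivalent lift) whenever $2+α<0$; with it, the rest of your argument goes through.
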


A combination of Theorem \ref{thm:GEifsigmanegative} and Theorem \ref{thm:unbounded} in particular entails 
\begin{corollary}
 Let $\Om=B_R\subset ℝ^N$, $N\ge 3$, let $S$, $D$ satisfy \eqref{condonDandS}, $S(0)=0$ and \eqref{eq:winvolumefilling2.11} with some $s_0>0$, $K>0$, $δ>0$ as well as \eqref{Sleq} and \eqref{Dgeq} with some $c_D>0$, $C_S>0$ and $m\in ℝ$ and $σ\le 0$. Assume that $G$ as defined in \eqref{defG} satisfies \eqref{Gleq} with some $α\inℝ$ and $C_G>0$.  If $-α>\f2N$, let $p\in[1,\f{2N}{N+2})$ if $α\le -\f4{N+2}$ and $p\in[1,-\f{αN}2)$ if $α> -\f4{N+2}$. Given radially symmetric $u_0\in C^{β}(\Ombar)$ for some $β>0$, there are radially symmetric functions $\ue, \ve$ such that $0=\Delta \ve-\ve +\ue$, $\delny\ve\bdry=0$ for any $η\in(0,1)$, and 
\[
 \norm[\Lom p]{\ue-u_0} \to 0 \qquad \text{as } η\searrow 0.
\]
 and that the solutions to \eqref{eq:system} for these initial data $u(\cdot,0):=\ue$ exist globally, but blow up at time $\infty$. 
\end{corollary}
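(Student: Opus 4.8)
The plan is to obtain the corollary as a direct superposition of Theorem~\ref{thm:unbounded} and Theorem~\ref{thm:GEifsigmanegative}: the former yields, near an arbitrary radial $u_0$, initial data producing unbounded solutions, while the latter guarantees that --- because $σ\le 0$ --- these very solutions are nonetheless global; and a global, unbounded solution is by definition one that blows up at time $\infty$. So the only real work is to check that the hypotheses of the corollary render both theorems applicable to the same family of initial data.

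First I would record that the assumptions have been assembled precisely for this: $\Om=B_R$ is a bounded domain with smooth boundary and $N\ge 3$; the conditions \eqref{condonDandS}, $S(0)=0$, \eqref{eq:winvolumefilling2.11}, \eqref{Gleq} together with $-α>\f2N$ and the stated range for $p$ are exactly the hypotheses of Theorem~\ref{thm:unbounded}, whereas $m\inℝ$, $σ\le 0$ and the structural bounds \eqref{Dgeq}, \eqref{Sleq} (with some $c_D>0$, $C_S>0$) are those of Theorem~\ref{thm:GEifsigmanegative}. Applying Theorem~\ref{thm:unbounded} to the given radially symmetric $u_0\in C^{β}(\Ombar)$ then produces, for every $η\in(0,1)$, radially symmetric $\ue,\ve$ with $0=\Delta\ve-\ve+\ue$ in $\Om$ and $\delny\ve\bdry=0$, with $\norm[\Lom p]{\ue-u_0}\to 0$ as $η\searrow 0$, and such that the solution $(u,v)$ of \eqref{eq:system} with $u(\cdot,0):=\ue$ is unbounded, i.e.\ $\limsup_{t\nearrow\Tmax}\norm[L^\infty(\Om)]{u(\cdot,t)}=\infty$.

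Next I would feed this same datum $\ue$ into Theorem~\ref{thm:GEifsigmanegative}. Since $σ\le 0$ and $D$, $S$ satisfy \eqref{condonDandS}, \eqref{Dgeq}, \eqref{Sleq}, and since the construction underlying Theorem~\ref{thm:unbounded} (carried out in Section~\ref{sec:construct}) delivers $\ue$ as a nonnegative function of class $C^{β'}(\Ombar)$ with some $β'\in(0,1)$, Theorem~\ref{thm:GEifsigmanegative} yields $\Tmax=\infty$ for this solution. Combining the two facts, the solution exists on all of $(0,\infty)$ yet satisfies $\limsup_{t\to\infty}\norm[L^\infty(\Om)]{u(\cdot,t)}=\infty$, which is exactly the statement that it blows up at time $\infty$; as this holds for each $η\in(0,1)$ and $\ue\to u_0$ in $\Lom p$, the corollary is proved.

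I do not anticipate any analytic difficulty, since both theorems are assumed in hand; the one point deserving genuine care is the compatibility of the admissibility classes --- one must confirm that the approximating data $\ue$ supplied by Theorem~\ref{thm:unbounded} are indeed nonnegative and Hölder continuous on $\Ombar$ with some exponent in $(0,1)$ (reading this off from the explicit construction in Section~\ref{sec:construct} rather than from the bare statement of Theorem~\ref{thm:unbounded}), and, if that exponent is not already below $1$, that it may be diminished without affecting the other properties.
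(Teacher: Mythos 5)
Your proposal is correct and is exactly the paper's argument: the corollary is stated there as an immediate combination of Theorem \ref{thm:unbounded} (unboundedness) and Theorem \ref{thm:GEifsigmanegative} (global existence for $σ\le 0$), with no further proof given. Your one point of genuine care — that the constructed data $\uehat=u_0+\ue+η^q$ are nonnegative and Hölder continuous (indeed $u_0\in C^{β}(\Ombar)$ plus a Lipschitz bump plus a constant), so that both theorems apply to the same initial datum — is well taken and correctly resolved by the explicit construction in Section \ref{sec:construct}.
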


In particular, with this we have detected a wide range of parameters $m$, $σ$ for which infinite-time blow-up is, in some sense, the typical behaviour of radially symmetric solutions to \eqref{eq:system}. 



\section{Global existence and boundedness}

This section is devoted to the results on global existence and boundedness. We begin the preparations by recalling a statement on local existence including an extensibility criterion. A similar result can be found, for example, in \cite[Lemma 2.1]{xie_xiang}. Note, however, that the present lemma gives a stronger assertion concerning the regularity of $v$ at time $0$, which will be crucial for our purpose. 

\begin{lemma}\label{lem:locex}
Let $S,D\in C^1(ℝ)$ be such that $D(s)>0$ for all $s\ge0$, let $β\in(0,1)$. Then for any nonnegative $u_0\in C^{β}(\Ombar)$ there is $\Tmax>0$ and a unique pair of functions $(u,v)\in (C^0(\Ombar\times[0,\Tmax))\cap C^{2,1}(\Ombar\times(0,\Tmax)))\times (C^0([0,\Tmax),C^1(\Ombar))\cap C^{2,1}(\Ombar\times(0,\Tmax)))$
 (hereafter: ``classical solution'') that satisfies \eqref{eq:system} and is such that 
\begin{equation}\label{extcrit}
 \text{either } \Tmax=\infty \; \text{or } \limsup_{t\nearrow \Tmax} \norm[\infty]{u(\cdot,t)} = \infty. 
\end{equation}
Moreover, $u$ and $v$ are nonnegative in $\Ombar\times(0,\Tmax)$.

%
\end{lemma}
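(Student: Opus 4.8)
The plan is to prove Lemma~\ref{lem:locex} by the standard fixed-point scheme for quasilinear parabolic problems, adapted so that the elliptic equation \eqref{veq} is used to eliminate $v$ and thereby reduce \eqref{eq:system} to a single scalar equation for $u$. First I would fix $β\in(0,1)$ and a nonnegative $u_0\in C^β(\Ombar)$, and for $T>0$ and $M>0$ consider the set $\calS_{T,M}$ of functions $u\in C^0(\Ombar\times[0,T])$ with $u(\cdot,0)=u_0$, $\normm{C^0(\Ombar\times[0,T])}{u}\le M$, and nonnegative. To a given $u\in\calS_{T,M}$ one associates $v:=v[u]$ as the solution of $0=\Delta v-v+u$, $\delny v\bdry=0$, for each fixed $t$; since $u(\cdot,t)\in C^0(\Ombar)$, elliptic regularity and the Neumann heat/resolvent estimates give $v(\cdot,t)\in C^{1,θ}(\Ombar)$ with a bound depending only on $M$, and the time-continuity of $t\mapsto u(\cdot,t)$ in $C^0$ transfers to $t\mapsto v(\cdot,t)$ in $C^1$ — this is the point where the claimed regularity $v\in C^0([0,T],C^1(\Ombar))$ \emph{up to $t=0$} comes from, and it is genuinely stronger than what one would get by treating $v$ as a parabolic quantity driven by $u$. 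Then one defines $\Phi(u)$ to be the solution $\utilde$ of the linear parabolic problem $\utilde_t=\na\cdot(D(u)\na\utilde)-\na\cdot(S(u)\na v[u])$, $\delny\utilde\bdry=0$, $\utilde(\cdot,0)=u_0$, where now $D(u)$, $S(u)$ and $\na v[u]$ are regarded as \emph{given} H\"older-continuous coefficients/data. Because $D(u)>0$ is bounded below on the relevant range and the coefficients are bounded in suitable H\"older norms, linear parabolic theory (e.g.\ Ladyzhenskaya--Solonnikov--Ural'tseva) yields a unique solution $\utilde\in C^0(\Ombar\times[0,T])\cap C^{2,1}(\Ombar\times(0,T])$ with $\normm{C^0}{\utilde}$ controlled.

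Next I would verify that, for $T=T(M)$ small enough, $\Phi$ maps $\calS_{T,M}$ into itself and is a contraction with respect to $\normm{C^0(\Ombar\times[0,T])}{\cdot}$. The self-mapping part uses that $\utilde(\cdot,0)=u_0$ together with continuity in time to keep $\normm{C^0}{\utilde}\le M$ once $T$ is small; the contraction estimate rests on the Lipschitz dependence of $D(u)$, $S(u)$ (here $C^1$ of $D,S$ enters) and of $v[u]$, $\na v[u]$ on $u$ in $C^0$, the latter again by linearity and continuity of the elliptic solution operator. Banach's fixed point theorem then produces a unique $u\in\calS_{T,M}$ with $\Phi(u)=u$; unwinding the definitions, $(u,v[u])$ is a classical solution of \eqref{eq:system} on $[0,T]$ with the stated regularity. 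Uniqueness in the full solution class follows by a standard Gronwall argument comparing two solutions on a short interval and then bootstrapping.

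To get the maximal existence time $\Tmax$ and the extensibility criterion \eqref{extcrit}, I would take $\Tmax$ to be the supremum of all $T$ for which a classical solution exists, and argue by contradiction: if $\Tmax<\infty$ and $\limsup_{t\nearrow\Tmax}\normm{\infty}{u(\cdot,t)}=:M_0<\infty$, then $u$ is bounded on $\Ombar\times[0,\Tmax)$, hence so are the coefficients $D(u),S(u)$ and the data $\na v[u]$ in the appropriate norms (interior parabolic estimates give uniform H\"older bounds on $u$ away from $t=0$, which upgrade the coefficient regularity); the short-time existence result applied with initial time $t_0$ close to $\Tmax$ and initial datum $u(\cdot,t_0)$ then yields a solution on $[t_0,t_0+τ]$ with $τ=τ(M_0)>0$ independent of $t_0$, extending $u$ beyond $\Tmax$ — a contradiction. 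Finally, nonnegativity of $u$ follows from the maximum principle for the scalar parabolic equation (the zeroth-order structure is in divergence form with no bad sign, and $S(u)\na v$ contributes a drift term), and then nonnegativity of $v$ follows from the maximum principle for $-\Delta v+v=u\ge 0$ with Neumann conditions.

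The main obstacle I anticipate is not the abstract fixed-point machinery but the bookkeeping needed to obtain the regularity of $v$ \emph{at} $t=0$, i.e.\ $v\in C^0([0,\Tmax),C^1(\Ombar))$ rather than merely $v\in C^0((0,\Tmax),C^1(\Ombar))$: this forces one to carry the elliptic estimates in a norm (e.g.\ $C^{1,θ}$) that is stable under the mere $C^0$-in-time continuity of $u$, and to avoid any argument that treats $v$ parabolically with $u$ as a rough forcing term. A secondary technical point is choosing the correct function spaces so that $S(u)\na v[u]\in C^{θ,θ/2}(\Ombar\times[0,T])$, which is what the linear Schauder theory for $\utilde$ needs; since $D,S\in C^1(ℝ)$ and $u\in C^0$ only, one typically first gains interior H\"older regularity of $u$ from the linear theory and then iterates, so the argument is mildly circular and must be set up as a genuine fixed point in a H\"older-type space on $\Ombar\times[\tau,T]$ combined with continuity up to $t=0$ in $C^0$.
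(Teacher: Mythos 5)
Your overall architecture (eliminate $v$ by the elliptic solve, run a fixed-point argument for $u$ alone, define $\Tmax$ as a supremum, get nonnegativity from the maximum principle) matches the paper's. But the two steps you treat as routine are exactly the ones where the paper does something specific, and as sketched they do not close. First, the contraction. For the map $u\mapsto\Phi(u)$ with frozen coefficients, the difference $w=\Phi(u_1)-\Phi(u_2)$ solves a linear problem whose forcing contains $\na\cdot\bigl((D(u_1)-D(u_2))\na\Phi(u_2)\bigr)$, so any Lipschitz estimate of $\Phi$ in $C^0(\Ombar\times[0,T])$ requires quantitative control of $\na\Phi(u_2)$. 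With $u_2$ only continuous, the principal coefficient $D(u_2)$ is only continuous, so neither Schauder theory nor De Giorgi--Nash--Moser supplies such gradient bounds; you acknowledge this circularity yourself, but escaping it by working in a H\"older space on $\Ombar\times[\tau,T]$ makes the contraction property harder still, not easier. The paper avoids the issue entirely by using \emph{Schauder's} fixed point theorem on the set $X=\{u:\norm[\Lom\infty]{u}\le\norm[\Lom\infty]{u_0}+1,\ u(\cdot,0)=u_0\}$: one only has to show that $\calS$ is continuous and compact (compactness coming from the parabolic regularity gain of the output over the input), and no comparison of two outputs is ever needed. If you insist on Banach's theorem you must either produce the missing gradient/H\"older estimates for the linear problem with merely continuous principal coefficients or change the functional setting; as written this step would fail.

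Second, uniqueness. A ``standard Gronwall argument'' on $\io(u_1-u_2)^2$ produces, after integrating by parts the difference of the diffusion terms, a term of the form $\io\na(u_1-u_2)\cdot(D(u_1)-D(u_2))\na u_2$, which can only be absorbed if $\norm[\Lom\infty]{\na u_2(\cdot,t)}$ is controlled. In the solution class of the lemma, $u$ is merely continuous up to $t=0$ and $C^{2,1}$ only for $t>0$, so there is no bound whatsoever on $\na u_2$ near $t=0$ and the Gronwall inequality does not close on $[0,T']$. The paper's uniqueness proof is designed around precisely this obstruction: it works with the quantity $\io|\na(v_1-v_2)|^2+\io(v_1-v_2)^2$, uses $\partial_t(-\Delta(v_1-v_2)+(v_1-v_2))=(u_1-u_2)_t$ to bring in the $u$-equation tested against $v_1-v_2$, and introduces $\Dbar(s)=\int_0^s D$ so that only $\na v_i$ and zeroth-order differences of $u_i$ appear --- quantities that \emph{are} controlled by the assumed regularity $v\in C^0([0,T),C^1(\Ombar))$. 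You should either adopt that argument or explain how to bound $\na u_2$ near $t=0$ within the stated solution class.
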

\begin{proof}
 We begin the proof with the assertion on uniqueness and assume that, for some fixed $T>0$, $(u_1,v_1),(u_2,v_2)\in (C^0(\Ombar\times[0,T))\cap C^{2,1}(\Ombar\times(0,T))\times (C^0([0,T),C^1(\Ombar))\cap C^{2,1}(\Ombar\times(0,T)))$ both solve \eqref{eq:system} with the same nonnegative initial data $u_1(\cdot,0)=u_0=u_2(\cdot,0)$. We note that this also implies $v_1(\cdot,0)=v_2(\cdot,0)$, because these functions solve $0=\Delta v_i(\cdot,0) - v_i(\cdot,0) +u_0$ in a weak sense due to $v_i\in C^0([0,T),C^1(\Ombar))$ and \eqref{veq}, and the weak solution of this equation is unique.

We pick an arbitrary $T'\in(0,T)$ and let $c_1>0$, $c_2>0$, $c_3>0$, $c_4>0$ and $c_5>0$ be such that 
\begin{align*}
 0&\le u_i(x,t)\le c_1 \quad \text{for all } (x,t)\in\Om\times(0,T'), i\in\{1,2\},\hspace{-2cm}\\
 &\sup_{ξ\in(0,c_1)} D(ξ)\le c_2, & 
 &\norm[\Lom\infty]{\na v_1(\cdot,t)} \le c_3 \quad \text{for all } t\in(0,T'),\\
 &\sup_{ξ\in(0,c_1)} |S'(ξ)|\le c_4, &
 &S(u_2(x,t))\le c_5 \quad \text{for all} (x,t)\in\Om\times(0,T').
\end{align*}

We have that 
\[
 0=\Delta (v_1-v_2) - (v_1-v_2) + (u_1-u_2)\qquad\text{in } \Om\times(0,T')
\]
 and hence obtain 
\begin{align*}
 \f12 \f{d}{dt} \kl{\io |\na (v_1-v_2)|^2 + \io (v_1-v_2)^2} &= \io \na (v_1-v_2)_t \na (v_1-v_2) + \io (v_1-v_2)_t (v_1-v_2)\\
 &= \io (-\Delta (v_1-v_2) + (v_1-v_2))_t (v_1-v_2) \\
 &= \io (u_1-u_2)_t(v_1-v_2) \qquad \qquad \text{in } (0,T'). 
\end{align*}
If we introduce $\Dbar(s):=\int_0^s D(ξ) dξ$ and insert \eqref{ueq}, we end up with 
\begin{align}\label{eq:uniqueness:ddt}
 \f12 \f{d}{dt}& \kl{\io |\na (v_1-v_2)|^2 + \io (v_1-v_2)^2} \nn\\
&= -\io \na(\Dbar(u_1)-\Dbar(u_2))\na (v_1-v_2) + \io (S(u_1)\na v_1 - S(u_2)\na v_2)\na (v_1-v_2)
\end{align}
in $(0,T')$. 
By the mean value theorem and the condition that $D(s)\ge c_6:=\inf_{ξ\in(0,c_1)} D(ξ) >0$, we have that $(\Dbar(u_1)-\Dbar(u_2))(u_1-u_2) \ge c_6 (u_1-u_2)^2$ and that $(\Dbar(u_1)-\Dbar(u_2))^2\le c_2^2(u_1-u_2)^2$. Integration by parts, \eqref{veq} and Young's inequality show that in $(0,T')$
\begin{align*}
 -\io &\na(\Dbar(u_1)-\Dbar(u_2))\na (v_1-v_2)\\
 &= \io (\Dbar(u_1)-\Dbar(u_2))\Delta (v_1-v_2)\\
 &= \io (\Dbar(u_1)-\Dbar(u_2))(v_1-v_2) - \io (\Dbar(u_1)-\Dbar(u_2))(u_1-u_2)\\
 &\le  \f{c_2^2}{2 c_6} \io (v_1-v_2)^2 + \f{c_6}{2c_2^2} \io (\Dbar(u_1)-\Dbar(u_2))^2 - \io (\Dbar(u_1)-\Dbar(u_2))(u_1-u_2)\\
 &\le \f{c_2^2}{2 c_6} \io (v_1-v_2)^2 + \f{c_2^2c_6}{2c_2^2} \io (u_1-u_2)^2 - c_6\io (u_1-u_2)^2\\
 &= \f{c_2^2}{2 c_6} \io (v_1-v_2)^2 - \f{c_6}2\io (u_1-u_2)^2, 
\end{align*}
whereas the last term in \eqref{eq:uniqueness:ddt} can be estimated according to 
\begin{align*}
 \io& (S(u_1)\na v_1 - S(u_2)\na v_2)\na (v_1-v_2) \\
 &= \io \kl{S(u_1)\na v_1- S(u_2)\na v_1 + S(u_2)\na(v_1-v_2)}\na(v_1-v_2)\\
&\le c_3 \io |S(u_1)-S(u_2)| |\na (v_1-v_2)| + c_5 \io |\na(v_1-v_2)|^2\\
&\le c_3c_4 \io |u_1-u_2| |\na (v_1-v_2)| + c_5 \io |\na(v_1-v_2)|^2\\
&\le \f{c_6}2 \io |u_1-u_2|^2 + \left(\f{c_3^2c_4^2}{2c_6}+c_5\right) \io |\na(v_1-v_2)|^2 \qquad \text{in } (0,T'). 
\end{align*}
In conclusion, in $(0,T')$ we obtain 
\[
 \f12 \f{d}{dt} \kl{\io |\na (v_1-v_2)|^2 + \io (v_1-v_2)^2} \le \left(\f{c_2^2}{2c_6}+\f{c_3^2c_4^2}{2c_6}+c_5\right) \kl{\io |\na(v_1-v_2)|^2 + \io (v_1-v_2)^2}, 
\]
which by Grönwall's inequality and $v_1(\cdot,0)=v_2(\cdot,0)$ shows that $v_1=v_2$ in $\Om\times(0,T')$ and hence in $\Om\times(0,T)$ by arbitrarity of $T'$. By \eqref{veq}, this entails that $u_1=u_2$.

For sufficiently small $T>0$ (where the precise meaning of ``sufficiently small'' depends on $\norm[\Lom\infty]{u_0}$ and $\norm[C^{β}(\Ombar)]{u_0}$), the map $\calS$ defined on the set 
\[
 X:=\{u\in C^0(\Ombar\times[0,T]) \mid \norm[\Lom\infty]{u}\le \norm[\Lom\infty]{u_0}+1, u(\cdot,0)=u_0 \}
\]
by $\calS \uhat=\utilde$, with $\utilde$ being the solution of 
\[
 \utilde_t=\na\cdot(D(\uhat)\na \utilde - S(\uhat)\na \vhat), \qquad \delny \utilde\bdry=0, \quad \utilde(\cdot,0)=u_0, 
\]
where $\vhat$ solves 
\begin{equation}\label{eq:ex:vhateq}
0=\Delta \vhat-\vhat+\uhat, \qquad \delny \vhat\bdry=0,
\end{equation}
can be seen to be a continuous and compact map of $X$ into $X$ and to hence have a fixed point $u$ according to Schauder's theorem. 
The corresponding calculations rely on the well-known elliptic regularity estimate for any $p\in(1,\infty)$ asserting the existence of a constant $C_p>0$ such that all solutions of \eqref{eq:ex:vhateq} satisfy 
\begin{equation}\label{eq:ellreg}
 \norm[W^{2,p}(\Om)]{\vhat}\le C_p\norm[\Lom p]{\uhat}
\end{equation}
(which can, e.g. be obtained from \cite[Thm. 19.1]{friedman} in combination with the estimate $\norm[\Lom p]{\vhat}\le \norm[\Lom p]{\uhat}$ that results from \eqref{eq:ex:vhateq} by testing with (an approximation of) $v^{p-1}$) and on parabolic regularity statements that can be found in \cite[Lemma 2.1]{lankeit_singcon}, parts iii) and iv), which also guarantee $u\in C^0(\Ombar\times[0,T])\cap C^{2,1}(\Ombar\times(0,T))$. We let $v$ be the solution of \eqref{eq:ex:vhateq} for $\uhat=u$. As particular consequence of \eqref{eq:ellreg} applied to some $p>N$ and linearity of \eqref{eq:ex:vhateq} let us note that 
\[
 \normm{C^0([0,T);C^1(\Ombar))}{v} \le c_6 \normm{C^0([0,T);W^{2,p}(\Om))}{v}\le C_pc_6 \normm{C^0([0,T);\Lom p)}{u}
\]
and hence $v\in C^0([0,T),C^1(\Ombar))$. The extensibility criterion \eqref{extcrit} can be concluded from the dependence of $T$ on $\norm[\Lom\infty]{u_0}$ and $\norm[C^\beta(\Ombar)]{u_0}$ in combination with \cite[Lemma 2.1 iv)]{lankeit_singcon} prohibiting blow-up of $\normm{C^\beta(\Ombar)}{u}$ while $\norm[\Lom \infty]{u}$ remains bounded. Nonnegativity is obtained from classical comparison theorems.
\end{proof}

In order to show boundedness of $u$, it suffices to estimate the norm of $u$ in a suitable $\Lom p$-space, with some large, but finite $p$. 
\begin{lemma}\label{lem:pimpliesinfty}
 Let $q_1>N+2$ and $q_2>\f{N+2}2$, $m\in ℝ$, $σ\in ℝ$, $β\in(0,1)$ and assume that $S$ and $D$ satisfy \eqref{condonDandS}, \eqref{Dgeq} and \eqref{Sleq} and let $u_0\in C^{β}(\Ombar)$. Let 
\[
 p>\max\left\{N,\;\f N2(1-m),\;q_1σ,\;1-m\f{(N+1)q_1-(N+2)}{q_1-(N+2)},\;1-\f{m}{1-\f{Nq_2}{(N+2)(q_2-1)}}\right\}. 
\]
 Then for every $K>0$ there is $C>0$ such that whenever $(u,v)\in $ solves \eqref{eq:system} in $\Om\times(0,T)$ for some $T>0$ and satisfies 
\[
 \norm[\Lom p]{u(\cdot,t)}\le K \text{ for all }t\in(0,T), 
\]
 then 
\[
 \norm[\Lom \infty]{u(\cdot,t)}\le C \text{ for all } t\in(0,T). 
\]
\end{lemma}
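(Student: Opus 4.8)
The plan is to convert the given $L^p$-control into an $L^\infty$-bound by a two-part bootstrap: first use the elliptic equation to make $\na v$ a priori bounded, then run a Moser-type iteration on $\io(u+1)^q$, driven by the dissipativity of the nonlinear diffusion, up to $q=\infty$. Throughout one should work on time slices $(t_0-1,t_0)\cap(0,T)$ (or $(0,t_0)$ when $t_0\le 1$), so that the final constant depends only on $K$ and not on $T$, as the statement demands.

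\emph{Step 1 (bounding $\na v$).} Since $p>N$, testing \eqref{veq} with (an approximation of) $v^{p-1}$ gives $\norm[\Lom p]{v(\cdot,t)}\le\norm[\Lom p]{u(\cdot,t)}\le K$, whereupon \eqref{eq:ellreg} together with the embedding $W^{2,p}(\Om)\hookrightarrow W^{1,\infty}(\Om)$ yields $\sup_{t\in(0,T)}\norm[\Lom\infty]{\na v(\cdot,t)}\le C=C(K)$. Thus \eqref{ueq} becomes, a posteriori, the scalar quasilinear equation $u_t=\na\cdot(D(u)\na u)-\na\cdot(S(u)\na v)$ with an a priori bounded drift; note that only the \emph{lower} bound \eqref{Dgeq} on $D$ and the \emph{upper} bound \eqref{Sleq} on $S$ are available, so standard uniformly parabolic regularity is not directly applicable and an energy method is forced.

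\emph{Step 2 (Moser--Alikakos iteration).} For $q\ge p$ I would test \eqref{ueq} with $(u+1)^{q-1}$. Integration by parts and \eqref{Dgeq} bound the dissipation from below by a constant multiple of $\io|\na(u+1)^{(q+m-1)/2}|^2$, whereas the cross-diffusive term is estimated by \eqref{Sleq}, the $L^\infty$-bound on $\na v$ and Young's inequality; absorbing a small part of the gradient term leaves a differential inequality of the schematic form
\[
 \f{d}{dt}\io(u+1)^q + c(q)\io\big|\na(u+1)^{(q+m-1)/2}\big|^2 \le C(q)\io(u+1)^{q+2σ-m-1}.
\]
The right-hand side is then handled by the Gagliardo--Nirenberg inequality, interpolating $(u+1)^{(q+m-1)/2}$ between $L^2(\Om)$ — absorbed into the dissipation, up to lower-order terms — and an $L^r(\Om)$-norm at the previous level of the iteration; adding a multiple of $\io(u+1)^q$ on both sides produces an inequality of the type $y'+cy\le C$, hence an $L^q$-bound. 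The role of the hypotheses on $p$ is exactly to keep the Gagliardo--Nirenberg exponent in front of the gradient norm strictly below $2$ at every level $q_k\to\infty$: this is where the subcritical thresholds $q_1>N+2$ and $q_2>\f{N+2}2$, reflecting the $(N+2)$-dimensional parabolic scaling, enter, together with the conditions coupling $p$ to $m$ that keep the auxiliary powers innocuous when $m\ne 1$. Tracking the growth of $c(q)$ and $C(q)$ and proving $\limsup_k\norm[\Lom{q_k}]{u(\cdot,t)}^{1/q_k}<\infty$ then gives the desired bound; alternatively, once the iteration has provided $u$ bounded in $\Lom q$ (on the time slices) for some sufficiently large $q$, one may conclude with a single parabolic regularity step, using that then $S(u)\na v$ is a divergence-form source in $L^{q_1}$ with $q_1>N+2$ while the remaining zeroth-order data lie in $L^{q_2}$ with $q_2>\f{N+2}2$.

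The main obstacle will be not any individual estimate but the uniformity of the iteration: one must control the dependence on $q$ of every constant as $q\to\infty$ and verify the Gagliardo--Nirenberg (equivalently, parabolic Sobolev) admissibility under precisely the stated lower bounds on $p$ — this is what forces the slightly opaque quantities $q_1σ$, $1-m\f{(N+1)q_1-(N+2)}{q_1-(N+2)}$ and $1-\f{m}{1-Nq_2/((N+2)(q_2-1))}$ into the hypothesis. Secondary technical nuisances are the behaviour of the substitution $(u+1)^{(q+m-1)/2}$ and of the error terms it generates when $m$ is negative (the source of $p>\f N2(1-m)$), and the bookkeeping needed to keep all constants independent of $T$.
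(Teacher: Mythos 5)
Your proposal matches the paper's argument: the paper likewise first bounds $\norm[W^{1,\infty}(\Om)]{v(\cdot,t)}$ via elliptic regularity using $p>N$, and then—rather than carrying out the Moser--Alikakos iteration by hand—observes that $f:=S(u)\na v$ is bounded in $L^\infty((0,T);\Lom{q_1})$ (this is where $p>q_1\sigma$ enters, via \eqref{Sleq}) and invokes \cite[Lemma A.1]{tao_winkler}, whose hypotheses are exactly the conditions on $p$, $q_1$, $q_2$ in the statement. Your Step 2 is thus essentially a sketch of the proof of that cited black-box lemma; the approach is the same and correct.
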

\begin{proof}
 Since $p>N$, $\norm[W^{1,\infty}(\Om)]{v(\cdot,t)}$ can be controlled by $\norm[\Lom p]{u(\cdot,t)}$ for $t\in (0,T)$ by elliptic regularity estimates (cf. \eqref{eq:ellreg}). With $f:=S(u)\na v$ and $g=0$ we hence have that $\norm[\Lom{q_1}]{f}\le C_S\norm[\Lom\infty]{\na v} \norm[\Lom{q_1}]{(u+1)^{σ}}$ is bounded in $(0,T)$ and \cite[Lemma A.1]{tao_winkler} is applicable.
\end{proof}

According to the previous lemma and \eqref{extcrit}, global existence and boundedness can be shown by ensuring that $\io u^p$ is bounded locally or globally in time, respectively, for some large $p$. These assertions will rest on the following differential inequality.

\begin{lemma}\label{lem:diffineq}
Let $m\in ℝ$, $σ\in ℝ$, $β\in(0,1)$. 
Let $D$ and $S$ satisfy \eqref{condonDandS} as well as \eqref{Dgeq} and \eqref{Sleq} with some $c_D>0$ and $C_S>0$ and let $u_0\in C^{β}(\Ombar)$ be nonnegative. 
Let $p\in(1,\infty)$ satisfy $p>1-σ$. Then the solution $(u,v)$ of \eqref{eq:system} satisfies 
\begin{equation}\label{diffineq}
 \f1p \f{d}{dt} \io (u+1)^p\le -\f{4c_D(p-1)}{(m+p-1)^2}\io |\na (u+1)^{\f{m+p-1}2}|^2 + \f{C_S(p-1)}{p+σ-1}\io (u+1)^{p+σ}
\end{equation}
in $(0,\Tmax)$.
\end{lemma}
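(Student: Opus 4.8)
The plan is to test the first equation of \eqref{eq:system} with $(u+1)^{p-1}$ and integrate by parts, keeping track of the diffusion and cross-diffusion contributions separately. Since $u$ is a classical solution on $\Om\times(0,\Tmax)$ with $u\ge 0$, and since $p>1-σ>0$ guarantees that all the powers appearing are well defined, this manipulation is justified pointwise in time.

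First I would compute
\[
 \f1p\f{d}{dt}\io(u+1)^p = \io(u+1)^{p-1}u_t = -\io(p-1)(u+1)^{p-2}D(u)|\na u|^2 + \io(p-1)(u+1)^{p-2}S(u)\na u\cdot\na v,
\]
using $\delny u\bdry=0$ to discard boundary terms. For the diffusion term I would invoke \eqref{Dgeq}, i.e.\ $D(u)\ge c_D(u+1)^{m-1}$, so that
\[
 -(p-1)\io(u+1)^{p-2}D(u)|\na u|^2 \le -c_D(p-1)\io(u+1)^{m+p-3}|\na u|^2 = -\f{4c_D(p-1)}{(m+p-1)^2}\io\big|\na(u+1)^{\f{m+p-1}2}\big|^2,
\]
where the last identity is just the chain rule $\na(u+1)^{\f{m+p-1}2}=\f{m+p-1}2(u+1)^{\f{m+p-3}2}\na u$ together with squaring; note $m+p-1\ne 0$ since $p>1-σ$ and one may absorb the borderline case by the genericity of $p$, but in fact the statement implicitly assumes $m+p-1\ne0$ (otherwise the right-hand side is vacuous), so no case distinction is needed.

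For the cross-diffusion term I would first substitute $\Delta v=v-u$ from \eqref{veq}. Writing $\na u\cdot\na v$ is awkward directly, so instead I would move the gradient: observe that $(u+1)^{p-2}S(u)\na u=\na H(u)$ for $H(u):=\int_0^u(s+1)^{p-2}S(s)\,ds$, integrate by parts, and use \eqref{Sleq}, $S(u)\le C_S u(u+1)^{σ-1}\le C_S(u+1)^{σ}$, to estimate $H(u)\le \f{C_S}{p+σ-1}(u+1)^{p+σ-1}$ (here $p+σ-1>0$ is exactly the hypothesis $p>1-σ$). This gives
\[
 (p-1)\io\na H(u)\cdot\na v = -(p-1)\io H(u)\Delta v = (p-1)\io H(u)(u-v) \le (p-1)\io H(u)\,u \le \f{C_S(p-1)}{p+σ-1}\io(u+1)^{p+σ},
\]
where we used $v\ge 0$ (from Lemma \ref{lem:locex}) to drop $-\io H(u)v\le0$, and $u\le u+1$. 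Combining the two estimates yields \eqref{diffineq}.

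The main obstacle is handling the sign and shape of the cross-diffusion term cleanly: one must exploit $\Delta v=v-u$ and the nonnegativity of $v$ to convert $\na v$ into the favourable $+\io H(u)u$ contribution rather than a term involving $|\na v|^2$, and one must be careful that the antiderivative $H$ inherits the right power-type bound from \eqref{Sleq} with the correct constant $\f{C_S}{p+σ-1}$ — this is where the assumption $p>1-σ$ is essential, both to make $H$ finite with the stated bound and to keep $p+σ-1>0$ in the denominator. Everything else is routine integration by parts and application of the structural hypotheses \eqref{condonDandS}, \eqref{Dgeq}, \eqref{Sleq}.
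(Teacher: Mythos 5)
Your proposal is correct and follows essentially the same route as the paper: test with $(u+1)^{p-1}$, use \eqref{Dgeq} on the diffusion term, move the gradient onto the antiderivative $H(u)=\int_0^u(s+1)^{p-2}S(s)\,ds$ (the paper's $\Stilde$), integrate by parts, and replace $-\Delta v$ by $u-v\le u$ before invoking \eqref{Sleq} with $p+σ-1>0$. No gaps worth noting.
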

\begin{proof}
We introduce 
\[
 \Stilde(u)=\int_0^u (ζ+1)^{p-2}S(ζ) dζ \qquad \text{for } u\ge 0
\]
and note that according to \eqref{Sleq}
\begin{equation}\label{Stildeleq}
 \Stilde(u) \le C_S \int_0^u(ζ+1)^{p+σ-2} dζ \le \f{C_S}{p+σ-1}(u+1)^{p+σ-1} \quad \text{for any } u\ge 0.
\end{equation}
We then use the first equation of \eqref{eq:system} together with integration by parts and the estimates \eqref{Dgeq} and $-\Delta v=u-v \le u$ to obtain 
\begin{align*}
 \f1p \f{d}{dt} \io (u+1)^p &= \io (u+1)^{p-1} \na\cdot(D(u)\na u - S(u)\na v)\\
 &= -(p-1)\io D(u)(u+1)^{p-2} |\na u|^2 + (p-1)\io (u+1)^{p-2} S(u)\na u\cdot \na v\\
 &\le  -c_D(p-1)\io (u+1)^{m+p-3} |\na u|^2 + (p-1)\io \na \Stilde(u)\cdot\na v\\
 &= -\f{4c_D(p-1)}{(m+p-1)^2}\io |\na (u+1)^{\f{m+p-1}2}|^2 - (p-1)\io \Stilde(u) Δv\\
&\le -\f{4c_D(p-1)}{(m+p-1)^2}\io |\na (u+1)^{\f{m+p-1}2}|^2 + (p-1)\io u\Stilde(u)
\end{align*}
in $(0,\Tmax)$, which due to \eqref{Stildeleq} results in \eqref{diffineq}.
\end{proof}

If $σ$ is negative, global existence directly results from the differential inequality \eqref{diffineq}. 

\begin{proof}[Proof of Theorem \ref{thm:GEifsigmanegative}]
 Local existence of a solution $(u,v)$ is ensured by Lemma \ref{lem:locex}. 
 Letting $p>1-σ$ be so large that Lemma \ref{lem:pimpliesinfty} becomes applicable, we employ Young's inequality to find $c_1>0$ such that 
\[
 \io (u+1)^{p+σ}(\cdot,t) \le c_1 + \io (u+1)^p (\cdot,t) =:y(t) \quad \text{for all } t\in(0,\Tmax), 
\]
so that Lemma \ref{lem:diffineq} guarantees $y'(t)\le y(t)$ for all $t\in (0,\Tmax)$. A combination of Lemma \ref{lem:pimpliesinfty} and \eqref{extcrit} then results in global existence. 
\end{proof}

Apparently, the estimate underlying this proof of Theorem \ref{thm:GEifsigmanegative} is rather rough, even neglecting the dissipative term in \eqref{diffineq}. If $σ<m-\f{N-2}N$, better estimates can be achieved, finally leading to boundedness of solutions, regardless of the sign of $σ$. We begin the preparation of the corresponding proof with the following different estimate of $\io (u+1)^{p+σ}$.

\begin{lemma}\label{lem:upsigma}
Let $\Om\subsetℝ^N$, $N\ge 1$, be a bounded domain with smooth boundary.
Let $\sigma\inℝ$ and $m\inℝ$ satisfy 
\begin{equation}
  σ<m-\f{N-2}N. \label{eq:sigmasmaller}
\end{equation}
Let $p>\max\{1,2-m,1-σ,(\f N2-1)σ+\f N2(1-m)\}$ and $c_D>0$. 
Then for any $K>0$ 
there is $C>0$ such that every nonnegative function $w\in C^2(\Ombar)$ which satisfies 
\begin{equation}\label{wl1bd}
 \io (w+1) \le K
\end{equation}
fulfils 
\[
 \f{C_S(p-1)}{p+σ-1}\io (w+1)^{p+σ} \le \f{2c_D(p-1)}{(m+p-1)^2} \io |\nabla (w+1)^{\f{m+p-1}2}|^2 + C.
\]
\end{lemma}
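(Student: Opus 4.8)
The plan is to deduce the claimed estimate from a Gagliardo--Nirenberg interpolation applied to the function $\phi:=(w+1)^{\f{m+p-1}2}$. Since $w\ge 0$ is of class $C^2(\Ombar)$ and $p>2-m$ makes the exponent $\f{m+p-1}2$ positive, $\phi$ is well-defined, positive, lies in $C^2(\Ombar)\subset W^{1,2}(\Om)$, and satisfies $|\na\phi|^2=|\na(w+1)^{\f{m+p-1}2}|^2$. Setting $a:=\f{2(p+σ)}{m+p-1}$ and $r:=\f2{m+p-1}$ (both positive, because $p>1-σ$ forces $p+σ>1$ and $p>2-m$ forces $m+p-1>1$), we have $\io (w+1)^{p+σ}=\io \phi^{a}$, whereas \eqref{wl1bd} becomes $\io \phi^{r}=\io(w+1)\le K$, i.e.\ $\norm[\Lom r]{\phi}\le K^{1/r}$.

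First I would apply the Gagliardo--Nirenberg inequality on the bounded smooth domain $\Om$ --- in the form valid also when the lower Lebesgue exponent $r$ is below $1$ --- to obtain, using $a>r$ (which is again just $p>1-σ$), a constant $C_{GN}>0$ with
\[
 \norm[\Lom a]{\phi}\le C_{GN}\kl{\norm[\Lom 2]{\na\phi}^{\theta}\norm[\Lom r]{\phi}^{1-\theta}+\norm[\Lom r]{\phi}},
\]
where $\theta\in(0,1)$ is fixed by $\f1a=\theta\kl{\f12-\f1N}+(1-\theta)\f1r$. Checking admissibility of this exponent is exactly where the hypotheses on $p$ enter: $\theta>0$ holds since $a>r$, while $\theta<1$ is equivalent to $\f1a>\f12-\f1N=\f{N-2}{2N}$, which after inserting $a$ rearranges to $p>(\f N2-1)σ+\f N2(1-m)$ (and is automatic when $N\le 2$); in particular $\theta<1$ keeps $a$ strictly below the critical Sobolev exponent $\f{2N}{N-2}$ when $N\ge 3$, so no dimensional restriction is needed. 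Raising the displayed inequality to the power $a$ and invoking $\norm[\Lom r]{\phi}\le K^{1/r}$ then yields constants $c_1,c_2>0$, depending only on $K,p,m,σ,N,\Om$, with
\[
 \io \phi^{a}\le c_1\norm[\Lom 2]{\na\phi}^{a\theta}+c_2.
\]

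The decisive point --- and the step I expect to require the most care --- is that $a\theta<2$. A short computation using $\f1r=\f{m+p-1}2$ gives $a\theta=\f{p+σ-1}{\f{m+p-2}2+\f1N}$, whose denominator is positive since $p>2-m$, and $a\theta<2$ unravels to $σ-1<m-2+\f2N$, i.e.\ to $σ<m-\f{N-2}N$ --- precisely the standing assumption \eqref{eq:sigmasmaller}. Hence Young's inequality applies: for every $ε>0$ there is $C(ε)>0$ with $c_1X^{a\theta}\le εX^{2}+C(ε)$ for all $X\ge 0$, so that $\io \phi^{a}\le ε\norm[\Lom 2]{\na\phi}^{2}+C(ε)+c_2$.

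Finally I would choose $ε:=\f{2c_D(p+σ-1)}{C_S(m+p-1)^2}$, multiply this last inequality by the constant $\f{C_S(p-1)}{p+σ-1}$ (positive because $p>1$ and $p>1-σ$), and use $|\na\phi|^2=|\na(w+1)^{\f{m+p-1}2}|^2$; this produces exactly the asserted inequality with $C:=\f{C_S(p-1)}{p+σ-1}\kl{C(ε)+c_2}$. Apart from the exponent bookkeeping of the third paragraph --- verifying $\theta\in(0,1)$ and $a\theta<2$ and recognising that these correspond precisely to the conditions on $p$ and to \eqref{eq:sigmasmaller} --- every step is a routine application of Gagliardo--Nirenberg and Young's inequality, so I do not expect further obstacles.
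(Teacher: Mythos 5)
Your proposal is correct and follows essentially the same route as the paper: a Gagliardo--Nirenberg interpolation of $\io (w+1)^{p+\sigma}$ between $\norm[\Lom 2]{\na (w+1)^{\f{m+p-1}2}}$ and the $L^1$ bound \eqref{wl1bd}, with the hypotheses on $p$ guaranteeing admissibility of the interpolation exponent and with \eqref{eq:sigmasmaller} being exactly the condition that the resulting power of the gradient norm is strictly below $2$, so that Young's inequality finishes the argument. The exponent bookkeeping (your $\theta$ is the paper's $a$, and your $a\theta<2$ is the paper's $\f{p+\sigma}{p+m-1}a<1$) matches the paper's computation.
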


\begin{proof}

We let 
\[
 a:= \f{\f {N}2 (p+m-1)(1-\f1{p+σ})}{1+\f {N}2(p+m-2)},
\]
so that $-\f{N(p+m-1)}{2(p+σ)}=(1-\f {N}2)a - \f{N(p+m-1)}2(1-a)$. By the conditions on $p$, positivity of $a$ is obvious. Moreover, we have $m-1>-\f2{N} p+(1-\f2{N})σ$, which means $p+m-1>p+σ-\f{2p}{N} -\f{2σ}{N}=(1-\f2{N})(p+σ)$ and hence $-\f{p+m-1}{p+σ}<\f2N-1$. Therefore, $p+m-1-\f{p+m-1}{p+σ}<\f2{N}+p+m-2$, i.e. $\f {N}2(p+m-1)(1-\f1{p+σ})<1+\f {N}2(p+m-2)$, which shows that $a<1$. 
%

We thus can apply the Gagliardo--Nirenberg inequality to find $c_1>0$ such that 
\begin{align}\label{gni-appl1}
 \io (w+1)^{p+σ} &= \io (w+1)^{\f{p+m-1}2\cdot \f{2(p+σ)}{p+m-1}}\nn\\
 & = \norm[\f{2(p+σ)}{p+m-1}]{(w+1)^{\f{p+m-1}2}}^{\f{2(p+σ)}{p+m-1}}\nn\\
  &\le c_1\norm[\Lom2]{\na w^{\f{p+m-1}2}}^{\f{2(p+σ)}{p+m-1}a} \norm[\f{2}{p+m-1}]{(w+1)^{\f{p+m-1}2}}^{\f{2(p+σ)}{p+m-1}(1-a)} + c_1\norm[\f{2}{p+m-1}]{(w+1)^{\f{p+m-1}2}}^{\f{2(p+σ)}{p+m-1}}
\end{align}


Furthermore, \eqref{eq:sigmasmaller} entails $\f {N}2(σ-m+1)<1$, so that $\f {N}2(p+σ-1)<1+\f {N}2(p+m-2)$ and hence 
\[
 \f{p+σ}{p+m-1}a=\f{\f {N}2(p+σ-1)}{1+\f {N}2(p+m-2)}<1. 
\]
We can therefore apply Young's inequality to \eqref{gni-appl1}
and accounting for \eqref{wl1bd}, we obtain $C=C(K)>0$ such that 
\[
 \f{C_S(p-1)}{p+σ-1}\io (w+1)^{p+σ} \le \f{2(p-1)c_D}{(m+p-1)^2} \norm[\Lom2]{\na w^{\f{p+m-1}2}}^{2}+C.\qedhere
\]
\end{proof}

We have seen that under the condition \eqref{eq:sigmasmaller} it is possible to estimate $\io (u+1)^{p+σ}$ by $\io |\na (u+1)^{\f{m+p-1}2}|^2$ and a constant.
This would transform \eqref{diffineq} into a statement of the form $y'\le c_1 - c_2\io |\na (u+1)^{\f{m+p-1}2}|^2$. 
 In order to derive boundedness of $\io (u+1)^p$ from this, we shall also require control of $\io u^p$ by means of $\io  |\na (u+1)^{\f{m+p-1}2}|^2$. If $σ\ge 0$, clearly the statement of Lemma \ref{lem:upsigma} is even stronger than that. Since our interest in this paper mainly lies in the case of $σ<0$, we prepare the following 

\begin{lemma}\label{lem:upgradient}
Let $\Om\subsetℝ^N$, $N\ge 1$, be a bounded domain with smooth boundary.
Let $m\in ℝ$, $c_D>0$ and $p>\max\{1,2-m,\f {N}2 (1-m)\}$. Then for every $K>0$ there is $C>0$ such that 
every nonnegative function $w\in C^2(\Ombar)$ satisfying $\io (w+1)\le K$ fulfils 

\begin{equation}\label{estupbynau}
 \kl{\io (w+1)^p}^{\f{m+p-1}p}\le \f{2c_D(p-1)}{(m+p-1)^2} \io |\na (w+1)^{\f{m+p-1}2}|^2 + C.
\end{equation}
\end{lemma}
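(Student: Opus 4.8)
The plan is to prove Lemma~\ref{lem:upgradient} by the same Gagliardo--Nirenberg-plus-Young scheme already used in the proof of Lemma~\ref{lem:upsigma}, only now interpolating the \emph{full} $L^p$-norm of $w+1$ (rather than $L^{p+\sigma}$) between the gradient term $\|\nabla(w+1)^{\frac{m+p-1}2}\|_{L^2(\Om)}$ and a low, essentially $L^1$-type norm controlled by \eqref{wl1bd}. Setting $\phi:=(w+1)^{\frac{m+p-1}2}$, the target is to bound $\io(w+1)^p=\|\phi\|_{L^{\frac{2p}{m+p-1}}(\Om)}^{\frac{2p}{m+p-1}}$, so I want the Gagliardo--Nirenberg inequality $\|\phi\|_{L^q}\le C(\|\nabla\phi\|_{L^2}^\theta\|\phi\|_{L^r}^{1-\theta}+\|\phi\|_{L^r})$ with $q=\frac{2p}{m+p-1}$ and $r=\frac{2}{m+p-1}$ (so that $\|\phi\|_{L^r}^r=\io(w+1)\le K$).

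First I would record the interpolation exponent explicitly: in analogy with Lemma~\ref{lem:upsigma}, set
\[
 \theta:=\frac{\frac N2(m+p-1)\left(1-\frac1p\right)}{1+\frac N2(m+p-2)},
\]
which is the unique value making the scaling relation $-\frac{N(m+p-1)}{2p}=\left(1-\frac N2\right)\theta-\frac{N(m+p-1)}2(1-\theta)$ hold. The conditions $p>2-m$ and $p>1$ give $m+p-1>1>0$ and $1-\frac1p>0$, so $\theta>0$; the condition $p>\frac N2(1-m)$, i.e.\ $\frac N2(1-m)<p$, rearranges to $\frac N2(m+p-1)\left(1-\frac1p\right)<1+\frac N2(m+p-2)$ (using $m+p-1>0$ to clear denominators), which is exactly $\theta<1$, so Gagliardo--Nirenberg applies on $W^{1,2}(\Om)$. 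Then I would compute the power of $\|\nabla\phi\|_{L^2}$ appearing after raising the inequality to the $\frac{2p}{m+p-1}$: it is $\frac{2p}{m+p-1}\theta=\frac{2\cdot\frac N2(p-1)}{1+\frac N2(m+p-2)}=\frac{N(p-1)}{1+\frac N2(m+p-2)}$.

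The decisive point is that this exponent be strictly less than $2$, so that Young's inequality can absorb the gradient contribution into the prescribed coefficient $\frac{2c_D(p-1)}{(m+p-1)^2}$ while dumping the rest into the constant $C=C(K)$. Requiring $\frac{N(p-1)}{1+\frac N2(m+p-2)}<2$ is equivalent (again clearing the positive denominator $1+\frac N2(m+p-2)$, which is positive because $p>2-m$ forces $m+p-2>0$) to $N(p-1)<2+N(m+p-2)$, i.e.\ $-N<2+N(m-2)=Nm-2N+2$, i.e.\ $0<Nm+2$, i.e.\ $Nm>-2$; but wait---this must hold for \emph{all} admissible $m$, so I should instead note that $p>\frac N2(1-m)$ already gives $1+\frac N2(m+p-2)>\frac N2(p-1)>0$ and hence $\frac{N(p-1)}{1+\frac N2(m+p-2)}<2$ directly, independently of any further sign condition on $m$. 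I expect this verification---that the $L^r$-norm factor is bounded by $K$ via \eqref{wl1bd} and that the GN exponent on the gradient stays below $2$ under precisely the hypothesis $p>\max\{1,2-m,\frac N2(1-m)\}$---to be the only real content; everything else is the routine substitution $\|\nabla\phi\|_{L^2}^{s}\le \eta\|\nabla\phi\|_{L^2}^2+C_\eta$ with $\eta$ chosen to match $\frac{2c_D(p-1)}{(m+p-1)^2}$, together with bounding $\|\phi\|_{L^r}^{\frac{2p}{m+p-1}(1-\theta)}$ and $\|\phi\|_{L^r}^{\frac{2p}{m+p-1}}$ by constants depending only on $K$ since $\|\phi\|_{L^r}^r=\io(w+1)\le K$. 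The main obstacle, such as it is, is purely bookkeeping: keeping the three occurrences of $m+p-1$ in the exponents straight and confirming the chain of equivalent inequalities translating $p>\frac N2(1-m)$ into $\theta<1$ and into the sub-quadratic gradient exponent.
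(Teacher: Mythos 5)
There is a genuine gap, and it originates in bounding the wrong quantity. Setting $\phi=(w+1)^{\f{m+p-1}2}$, you declare the target to be $\io(w+1)^p=\|\phi\|_{L^{2p/(m+p-1)}(\Om)}^{2p/(m+p-1)}$ and accordingly raise the Gagliardo--Nirenberg inequality to the power $\f{2p}{m+p-1}$, so that the gradient appears to the power $\f{2p}{m+p-1}\theta=\f{N(p-1)}{1+\f N2(m+p-2)}$. But the left-hand side of \eqref{estupbynau} is $\kl{\io(w+1)^p}^{\f{m+p-1}p}=\|\phi\|_{L^{2p/(m+p-1)}(\Om)}^{2}$, i.e.\ only the \emph{square} of that norm, and the distinction is essential: the exponent $\f{N(p-1)}{1+\f N2(m+p-2)}$ is \emph{not} always below $2$ under the stated hypotheses. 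Indeed, $\f{N(p-1)}{1+\f N2(m+p-2)}<2$ is equivalent to $m>1-\f2N$ (your intermediate ``$Nm>-2$'' is an arithmetic slip), and your proposed repair --- that $p>\f N2(1-m)$ already yields $1+\f N2(m+p-2)>\f N2(p-1)$ --- is false, since that inequality is likewise equivalent to $1+\f N2 m-\f N2>0$, i.e.\ to $m>1-\f2N$, which is not assumed. Concretely, for $N=3$, $m=0$, $p=2.1$ (admissible, since $\max\{1,2-m,\f N2(1-m)\}=2$) one finds $\f{N(p-1)}{1+\f N2(m+p-2)}=\f{3.3}{1.15}\approx 2.87>2$, so Young's inequality cannot absorb the gradient contribution into $\varepsilon\io|\na\phi|^2$.

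The repair is immediate and is exactly the paper's route: since the quantity to be estimated equals $\|\phi\|_{L^{2p/(m+p-1)}(\Om)}^{2}$, you only need to square the Gagliardo--Nirenberg inequality, so the gradient appears to the power $2\theta$, and $2\theta<2$ follows from $\theta<1$, which you have already correctly derived from $p>\f N2(1-m)$ (your $\theta$ is the paper's $b$). With that single change, the remainder of your argument --- bounding the $L^{2/(m+p-1)}$-norm factors by a constant depending on $K$ via $\io(w+1)\le K$ and applying Young's inequality with the prescribed coefficient --- goes through as written.
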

\begin{proof}
We let 
\[
 b=\f{\f{{N}(m+p-1)}2(1-\f1p)}{1+\f {N}2(m+p-2)}
\]
so that $-\f{{N}(m+p-1)}{2p} = (1-\f {N}2)b - \f{{N}(m+p-1)}2(1-b)$ and that, by the conditions imposed on $p$, $b$ is clearly positive and 
\[
-\f1p(m+p-1) = \f{1-m}p-1<\f2N-1=\f2{N}(1+\f {N}2(m+p-2-(m+p-1))), 
\]
showing that $\f {N}2(m+p-1)(1-\f1p)<1+\f {N}2(m+p-2)$ and hence also $b<1$. 
From the Gagliardo--Nirenberg inequality we then obtain $c_1>0$ such that 
\begin{align*}
 \kl {\io (w+1)^p}^{\f{m+p-1}p} &= \kl{\io (w+1)^{\f{m+p-1}2 \cdot \f{2p}{m+p-1}}}^{\f{m+p-1}p}\\
 &= \norm[{\f{2p}{m+p-1}}]{(w+1)^{\f{m+p-1}2}}^2\\
 &\le c\norm[\Lom2]{\na (w+1)^{\f{m+p-1}2}}^{2b} \norm[{\f{2}{m+p-1}}]{(w+1)^{\f{m+p-1}2}}^{2(1-b)} + c \norm[{\f2{m+p-1}}]{(w+1)^{\f{m+p-1}2}}^2
\end{align*}
holds for every $w\in C^2(\Ombar)$, and, thanks to $b<1$ and $\io (w+1)\le K$, \eqref{estupbynau} follows via an application of Young's inequality. 
%
%
\end{proof}

With the help of this estimate, we have reduced the proof of boundedness by means of Lemma \ref{lem:diffineq} to the following elementary situation. 

\begin{lemma}\label{lem:gronwalllike}
Let $f\colon ℝ\toℝ$ be such that that there exists $x_0\inℝ$ with $f(x)<0$ for any $x>x_0$. Let $y\in C^0([0,T))\cap C^1((0,T))$ for some $T>0$ be such that 
\[
 y'(t)\le f(y(t)) \qquad \text{for any } t\in(0,T).
\]
Then $y(t)\le \max\set{y(0),x_0}$ for any $t\in(0,T)$. 
\end{lemma}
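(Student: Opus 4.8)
The plan is to run a soft comparison argument by contradiction, using a ``last time at or below the threshold'' construction together with the fact that $f$ is negative to the right of $x_0$.

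Put $M:=\max\{y(0),x_0\}$ and suppose the conclusion fails, so that $y(t_1)>M$ for some $t_1\in(0,T)$. First I would isolate the relevant subinterval: let
\[
 t_0:=\sup\{t\in[0,t_1]\colon y(t)\le M\}.
\]
This supremum is taken over a set that is nonempty (it contains $0$, since $y(0)\le M$) and, by continuity of $y$ on $[0,t_1]$, closed; hence $t_0$ itself lies in the set, i.e.\ $y(t_0)\le M$, and $t_0<t_1$ because $y(t_1)>M$. By maximality of $t_0$ we then have $y(t)>M$ for every $t\in(t_0,t_1]$.

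Next I would insert this into the differential inequality. For $t\in(t_0,t_1]$ we have $y(t)>M\ge x_0$, so the hypothesis on $f$ gives $f(y(t))<0$, and therefore $y'(t)\le f(y(t))<0$ for all $t\in(t_0,t_1)$; note that $(t_0,t_1)\subset(0,T)$ since $t_0\ge0$, so both the assumed inequality and the differentiability of $y$ are available there. Thus $y$ is strictly decreasing on $(t_0,t_1]$: for any $a\in(t_0,t_1)$, the mean value theorem on $[a,t_1]$ yields $y(t_1)<y(a)$. Letting $a\searrow t_0$ and using continuity of $y$ at $t_0$ gives $y(t_1)\le y(t_0)\le M$, contradicting $y(t_1)>M$. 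Hence no such $t_1$ exists, i.e.\ $y(t)\le M$ for all $t\in(0,T)$, and the bound at $t=0$ is trivial.

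There is no genuine obstacle here — this is an elementary ODE-comparison statement — but two points need care and I would keep them in mind. First, $f$ is not assumed continuous, so one must use \emph{only} the sign condition ``$f(x)<0$ for $x>x_0$'' and nothing else about $f$; in particular there is no point in trying to compare $y$ with the solution of $z'=f(z)$. Second, $y$ is only $C^1$ on the open interval $(0,T)$, so the monotonicity conclusion and the limit $a\searrow t_0$ must be carried out on closed subintervals of $(t_0,t_1]$ and then closed up by continuity, rather than by differentiating at an endpoint; and the existence of $t_0$ with $y(t_0)\le M$ relies on the closedness of sublevel sets of the continuous function $y$.
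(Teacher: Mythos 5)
Your proof is correct and follows essentially the same route as the paper: both define $t_0$ as the last time the solution lies at or below a threshold, use that $y>x_0$ afterwards forces $y'\le f(y)<0$, and conclude $y(t)\le y(t_0)\le\max\{y(0),x_0\}$. The only cosmetic differences are that you argue by contradiction with threshold $\max\{y(0),x_0\}$ and the mean value theorem, while the paper argues directly with threshold $x_0$ and integrates $y'$; your extra care about $y$ being $C^1$ only on the open interval is a fair point that the paper glosses over.
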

\begin{proof}
Assuming $t\in(0,T)$ to be given, we let $t_0:=\sup\set{s\in[0,t] \mid y(s)\le x_0}$ (or $t_0=0$ in case this set is empty). By definition, we have that $y(s)>x_0$ for all $s\in(t_0,t)$ and $y(t_0)\le \max\{x_0,y(0)\}$. Hence 
\[
 y(t)=y(t_0)+\int_{t_0}^t y'(s)ds \le y(t_0) + \int_{t_0}^t f(y(s)) ds \le y(t_0) + \int_{t_0}^t 0\; ds = y(t_0) \le \max\{x_0,y(0)\}\qedhere 
\]
\end{proof}

\begin{proof}[Proof of Theorem \ref{thm:bdness}]
Local existence of solutions is guaranteed by Lemma \ref{lem:locex}. If we then combine the differential inequality from Lemma \ref{lem:diffineq} with the estimates of Lemma \ref{lem:upsigma} and Lemma \ref{lem:upgradient}, for any sufficiently large $p$ we obtain $c_1>0$ and $c_2>0$ such that 
\[
 \f{d}{dt} \io (u+1)^p \le c_1 - c_2\kl{\io (u+1)^p}^{\f{m+p-1}p}, 
\]
which, according to Lemma \ref{lem:gronwalllike}, shows boundedness of $\io (u+1)^p$ and hence, by Lemma \ref{lem:pimpliesinfty} boundedness of $u$. 
\end{proof}

\section{The energy functional -- and unboundedness of solutions}

As announced in the introduction, the proof of unboundedness of solutions relies on use of the functional \eqref{defF}, namely on the fact that it decreases along solution trajectories, in the case of global bounded solutions cannot decrease below its lowest value for radially symmetric steady states, but, depending on the initial data, might start from an even lower number. 

We begin by recalling that $\calF$ actually is an energy functional. 

\begin{lemma} \label{lem:Fdecr}
If $(u,v)$ is a classical solution to \eqref{eq:system} with some $D$, $S$ satisfying \eqref{condonDandS}, then the function $\calF$  defined by \eqref{defF} satisfies 
\begin{equation}\label{ddtF}
 \f{d}{dt} \calF(u,v) = - \calD(u,v) \quad \text{on } (0,\Tmax), 
\end{equation}
where 
\begin{equation}\label{defcalD}
 \calD(u,v):=\io S(u) \left|\f{D(u)}{S(u)}\na u - \na v\right|^2.
\end{equation}
\end{lemma}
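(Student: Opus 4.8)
The plan is to compute $\frac{d}{dt}\calF(u,v)$ directly by differentiating the four integrals in \eqref{defF}, using the two equations of \eqref{eq:system} and integration by parts, and to observe that all boundary terms vanish because $\delny u\bdry=\delny v\bdry=0$. First I would handle the purely $v$-dependent terms: since $0=\Delta v-v+u$, one gets $u-v=-\Delta v$, and
\[
 \f{d}{dt}\kl{\f12\io|\na v|^2+\f12\io v^2-\io uv}
 =\io\kl{-\Delta v+v-u}v_t + \io\kl{-\Delta v + v - u}_t\, v
\]
after integrating $\io\na v\cdot\na v_t$ by parts; wait — more carefully, I would write the first three terms together as $\f12\io|\na v|^2+\f12\io v^2-\io uv$ and differentiate, getting $\io\na v\cdot\na v_t+\io v v_t-\io u_t v-\io u v_t=\io(-\Delta v+v-u)v_t-\io u_t v$, and the first bracket is $0$ by \eqref{veq}. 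So the $v$-block contributes exactly $-\io u_t v$. (The subtlety that $v_t$ makes sense even though $v$ solves an elliptic equation is covered by the regularity $v\in C^{2,1}$ asserted in Lemma~\ref{lem:locex}, so I would just invoke that.)

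Next I would differentiate $\io G(u)$, which gives $\io G'(u)u_t$ with $G'(u)=\int_1^u\f{D(ξ)}{S(ξ)}dξ$ by \eqref{defG}. Adding the $v$-block, the total is
\[
 \f{d}{dt}\calF(u,v)=\io\kl{G'(u)-v}u_t.
\]
Now I substitute $u_t=\na\cdot(D(u)\na u-S(u)\na v)$ from \eqref{ueq} and integrate by parts once more (boundary term vanishes by the no-flux condition), obtaining
\[
 \f{d}{dt}\calF(u,v)=-\io\na\kl{G'(u)-v}\cdot\kl{D(u)\na u-S(u)\na v}.
\]
Since $\na G'(u)=\f{D(u)}{S(u)}\na u$, the gradient factor is $\f{D(u)}{S(u)}\na u-\na v$, and the flux factor is $S(u)\kl{\f{D(u)}{S(u)}\na u-\na v}$; hence the integrand is exactly $S(u)\big|\f{D(u)}{S(u)}\na u-\na v\big|^2$, which is $\calD(u,v)$ as defined in \eqref{defcalD}. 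This yields \eqref{ddtF}.

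The only genuine obstacle is justifying the manipulations rigorously rather than formally: that $v_t$ exists and lies in the right spaces so that $\io\na v\cdot\na v_t$ and $\io u_t v$ make sense, and that $G'(u)$ is differentiable in $x$ with $\na G'(u)=\f{D(u)}{S(u)}\na u$ — this needs $S(u)>0$ along the solution, which holds on $(0,\Tmax)$ away from any zero set of $u$, but since $S(0)$ may be $0$ one should note $\na G'(u)$ still multiplies the flux to give the clean bilinear form and no division by zero actually occurs after combining with the $S(u)$ factor. All of this is available from the $C^{2,1}$ regularity in Lemma~\ref{lem:locex}, so I expect the write-up to be short: state the regularity, differentiate, integrate by parts twice, and collect terms.
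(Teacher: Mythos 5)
Your proposal is correct and follows essentially the same route as the paper: differentiate $\calF$, use $0=\Delta v-v+u$ to annihilate the $v_t$-terms, integrate by parts against $u_t=\na\cdot(D(u)\na u-S(u)\na v)$, and recognize the integrand $-\kl{\f{D(u)}{S(u)}\na u-\na v}\cdot S(u)\kl{\f{D(u)}{S(u)}\na u-\na v}$ as $-\calD(u,v)$. The only cosmetic difference is that the paper expands the product into four terms and reassembles the complete square, whereas you factor out $S(u)$ directly.
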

\begin{proof}
We note that with $G$ as defined in \eqref{defG}
\[
 G'(u)=\int_1^u\f{D(ξ)}{S(ξ)} dξ \qquad \text{for any } u\in(0,∞)
\]
and hence 
\[
 \nabla G'(u)=\f{D(u)}{S(u)}\nabla u \qquad \text{for any positive differentiable function }u.
\]
With \eqref{eq:system} and integration by parts, the calculations are straightforward and we give them without further comment: 
\begin{align*}
 \f{d}{dt} \calF(u,v) &= \io \na v \na v_t + \io vv_t - \io uv_t -\io u_tv + \io G'(u)u_t\\
 &= - \io Δvv_t +\io vv_t - \io uv_t + \io \kl{G'(u)-v} \na\cdot\kl{D(u)\na u - S(u)\na v}\\
 &= -\io (Δv-v+u)v_t - \io \kl{\na G'(u)-\na v}\kl{D(u)\na u-S(u)\na v}\\
 &= - \io \f{D(u)}{S(u)} D(u) |\na u|^2 +\io \f{D(u)}{S(u)}S(u)\na u\cdot\na v + \io D(u)\na u\cdot\na v - \io S(u)|\na v|^2\\
 &= - \io \f{(D(u))^2}{S(u)} |\na u|^2 + 2\io D(u)\na u\cdot \na v -\io S(u)|\na v|^2\\
 &= -\io S(u)\kl{|\na v|^2 -2\f{D(u)}{S(u)} \na u\cdot \na v + \f{(D(u))^2}{(S(u))^2} |\na u|^2}\\
 &= -\io S(u) \left\lvert \f{D(u)}{S(u)}\na u- \na v\right\rvert^2\qquad \text{on } (0,\Tmax).\qedhere
\end{align*}
\end{proof}

We can (and will) simplify the expression for $\calF$ in the particular situation that $u$ and $v$ fulfil \eqref{veq}: 

\begin{lemma}\label{lem:Fsimplified}
 If $(u,v)\in C(\Ombar)\times C^1(\Ombar)$ is such that
\begin{equation}\label{ellipteq}
 0=Δv-v+u,
\end{equation}
 is satisfied in the weak sense, then 
\begin{equation}\label{F:simplified}
 \calF(u,v)=\io G(u) - \f12\io uv.
\end{equation}
\end{lemma}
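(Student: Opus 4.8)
The plan is to exploit the weak formulation of \eqref{ellipteq} with $v$ itself playing the role of the test function. Recalling the definition \eqref{defF},
\[
 \calF(u,v)=\f12\io|\na v|^2+\f12\io v^2-\io uv+\io G(u),
\]
the term $\io G(u)$ already appears in \eqref{F:simplified}, so the only thing that needs to be rewritten is the quadratic expression $\f12\io|\na v|^2+\f12\io v^2$. By assumption, $v$ solves $0=Δv-v+u$ in the weak sense, i.e.
\[
 \io\na v\cdot\na\varphi+\io v\varphi=\io u\varphi
\]
holds for all admissible test functions $\varphi$ (say $\varphi\in C^\infty(\Ombar)$, equivalently $\varphi\in W^{1,2}(\Om)$).

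Next I would insert $\varphi=v$. This is legitimate because $\Om$ is a bounded smooth domain, so $v\in C^1(\Ombar)\subset W^{1,2}(\Om)$, and smooth functions are dense in $W^{1,2}(\Om)$; hence the identity above extends to $\varphi=v$ by approximation. It gives
\[
 \io|\na v|^2+\io v^2=\io uv,
\]
and therefore $\f12\io|\na v|^2+\f12\io v^2=\f12\io uv$. Substituting this back into the definition of $\calF$ yields
\[
 \calF(u,v)=\f12\io uv-\io uv+\io G(u)=\io G(u)-\f12\io uv,
\]
which is exactly \eqref{F:simplified}.

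There is essentially no obstacle in this argument; the only point requiring a word of care is the admissibility of $v$ as a test function in the weak formulation, and this follows at once from $v\in C^1(\Ombar)$ together with the smoothness of $\partial\Om$. Everything else is an immediate substitution.
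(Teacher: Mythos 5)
Your proof is correct and follows essentially the same route as the paper, which also tests the weak formulation of \eqref{ellipteq} with (a multiple of) $v$ to obtain $\io|\na v|^2+\io v^2=\io uv$ and then substitutes into the definition of $\calF$. The remark on the admissibility of $v\in C^1(\Ombar)$ as a test function is a fine, if minor, point of care.
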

\begin{proof}
If \eqref{ellipteq} holds, then, upon using $\frac12 v$ as test function, we have 
\[
 \f12\io |\na v|^2 = -\f12 \io v^2 + \f12\io uv.  
\]
If we insert this into the definition of $\calF$, we obtain \eqref{F:simplified}.
\end{proof} 

If a solution $(u,v)$ is global and bounded, $\calF(u(\cdot,t),v(\cdot,t))$ converges, at least along a sequence $t_k\nearrow\infty$.

\begin{lemma}\label{lem:largetime}
 Assume that $u_0\in C^{β}(\Ombar)$, $β\in(0,1)$ is nonnegative, $S$ and $D$ satisfy \eqref{condonDandS} and $S(0)=0$ and that $(u,v)$ is a global classical solution of \eqref{eq:system} which is bounded in the sense that 
\[
 \sup_{t\in(0,\infty)} \norm[\Lom \infty]{u(\cdot,t)} < \infty. 
\]
Then there are a sequence $(t_k)_{k\inℕ}\nearrow \infty$ and $(u_\infty,v_\infty)\in (C^2(\Ombar))^2$ such that $(u(\cdot,t_k),v(\cdot,t_k))\to (u_\infty,v_\infty)$ in $(C^2(\Ombar))^2$ as $k\to \infty$ and $(u_\infty,v_\infty)$ satisfies 
\begin{equation}\label{largetimeprops}
 \io u_\infty=\io u_0,\quad -\Delta v_\infty+v_\infty=u_\infty,\quad \delny v_\infty\bdry=0,\quad D(u_\infty)\na u_\infty = S(u_\infty)\na v_\infty.
\end{equation}
If $u_0$ is radially symmetric, then also $(u_\infty,v_\infty)$ is radially symmetric.
\end{lemma}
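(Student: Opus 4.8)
\medskip

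The plan is to combine the dissipation identity of Lemma~\ref{lem:Fdecr} with standard parabolic compactness, the limit functions being produced along a sequence of times at which $\calD$ becomes small. First I would record conservation of mass: integrating \eqref{ueq} over $\Om$ and invoking $\delny u\bdry=\delny v\bdry=0$ gives $\f{d}{dt}\io u=0$, hence $\io u(\cdot,t)=\io u_0$ for all $t>0$, which together with the assumed $L^\infty$-bound also yields a time-independent $L^1$-bound. The case $u_0\equiv0$ is trivial ($u\equiv v\equiv0$, take $u_\infty=v_\infty=0$), so I assume $u_0\not\equiv0$; then the strong maximum principle applied to \eqref{ueq}, rewritten as a linear parabolic equation for $u$ with coefficients bounded on the (bounded) range of $u$ --- the zeroth order term $\f{S(u)}u(u-v)$ being under control because $S(0)=0$ --- shows $u>0$ on $\Ombar\times(0,\infty)$, so that $\calF(u(\cdot,t),v(\cdot,t))$ and $\calD(u(\cdot,t),v(\cdot,t))$ are finite for every $t>0$.

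Next I would extract the sequence of times. By Lemma~\ref{lem:Fdecr}, $t\mapsto\calF(u(\cdot,t),v(\cdot,t))$ is nonincreasing on $(0,\infty)$, and it is bounded below: using the simplified form from Lemma~\ref{lem:Fsimplified}, the fact that $G\ge0$ (since $G$ is convex with $G'(1)=0$), and the elliptic estimate $\norm[\Lom2]{v(\cdot,t)}\le\norm[\Lom2]{u(\cdot,t)}$ obtained by testing \eqref{veq} with $v$, one gets $\calF(u(\cdot,t),v(\cdot,t))=\io G(u(\cdot,t))-\f12\io u(\cdot,t)v(\cdot,t)\ge-\f12|\Om|\,\big(\sup_{s>0}\norm[\Lom\infty]{u(\cdot,s)}\big)^2$. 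Hence $\calF(u(\cdot,t),v(\cdot,t))$ converges as $t\to\infty$, and integrating \eqref{ddtF} gives $\int_1^\infty\calD(u(\cdot,s),v(\cdot,s))\,ds<\infty$; in particular there is a sequence $(t_k)_{k\in\mathbb{N}}\nearrow\infty$ with $\calD(u(\cdot,t_k),v(\cdot,t_k))\to0$.

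The step I expect to be the main technical obstacle is the compactness. Since $\norm[\Lom\infty]{u(\cdot,t)}$ is bounded uniformly in $t$, elliptic regularity for \eqref{veq} bounds $v(\cdot,t)$ in $W^{2,q}(\Om)$ for every $q<\infty$, hence in $C^{1,\gamma}(\Ombar)$, uniformly in time; consequently \eqref{ueq} is a uniformly parabolic scalar equation for $u$ (as $D$ is continuous and positive, hence bounded between positive constants on the range of $u$) with uniformly bounded lower order coefficients, so interior and boundary parabolic Hölder and Schauder estimates give a bound for $u$ in $C^{2+\gamma,1+\f{\gamma}2}(\Ombar\times[t,t+1])$ uniform for $t\ge1$; feeding this back into \eqref{veq} bounds $v(\cdot,t)$ in $C^{2+\gamma}(\Ombar)$ uniformly in $t\ge1$. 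Therefore $\set{(u(\cdot,t),v(\cdot,t)):t\ge1}$ is precompact in $(C^2(\Ombar))^2$, and after passing to a subsequence of $(t_k)$ (not relabeled) I obtain $(u_\infty,v_\infty)\in(C^2(\Ombar))^2$ with $(u(\cdot,t_k),v(\cdot,t_k))\to(u_\infty,v_\infty)$ in $(C^2(\Ombar))^2$ and $u_\infty\ge0$.

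Finally I would identify the limit. Passing to the limit in $\io u(\cdot,t_k)=\io u_0$, in \eqref{veq}, and in $\delny v(\cdot,t_k)\bdry=0$ yields the first three relations in \eqref{largetimeprops}. For the last one, note that $D(u)\na u-S(u)\na v$ vanishes wherever $u=0$ (such a point is a minimum of the nonnegative function $u(\cdot,t)$ over $\Ombar$, so its full gradient vanishes there, using $\delny u\bdry=0$ at boundary points), so with $C_M:=\max_{[0,M]}S$ where $M:=\sup_{s\ge1}\norm[\Lom\infty]{u(\cdot,s)}<\infty$ one has $\io|D(u)\na u-S(u)\na v|^2\le C_M\,\calD(u,v)$, because $\calD(u,v)=\int_{\set{u>0}}\f1{S(u)}|D(u)\na u-S(u)\na v|^2$; evaluating at $t=t_k$ the right-hand side tends to $0$ while the integrand on the left converges uniformly on $\Ombar$ to $|D(u_\infty)\na u_\infty-S(u_\infty)\na v_\infty|^2$, whence $D(u_\infty)\na u_\infty=S(u_\infty)\na v_\infty$. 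For the symmetry statement (where $\Om$ is a ball), rotation invariance of \eqref{eq:system} together with the uniqueness part of Lemma~\ref{lem:locex} forces $u(\cdot,t)$, and by \eqref{veq} then also $v(\cdot,t)$, to be radially symmetric for every $t>0$, a property inherited by $(u_\infty,v_\infty)$.
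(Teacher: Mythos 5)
Your proposal is correct and follows essentially the same route as the paper: conservation of mass, monotonicity and boundedness from below of $\calF$ giving $\int_1^\infty\calD\,dt<\infty$, a sequence $t_k\nearrow\infty$ along which $\calD\to0$, uniform parabolic/elliptic Schauder bounds plus Arzel\`a--Ascoli for $C^2$-compactness, and identification of the limit. The only (harmless) variations are that you insert a positivity step via the strong maximum principle, and that you obtain the identity $D(u_\infty)\na u_\infty=S(u_\infty)\na v_\infty$ from the integral bound $\io|D(u)\na u-S(u)\na v|^2\le C_M\,\calD(u,v)$ rather than, as the paper does, by the pointwise case distinction $u_\infty(x)=0$ (where $\na u_\infty(x)=0$ and $S(0)=0$) versus $u_\infty(x)\neq0$ (where $\liminf_k S(u(x,t_k))>0$); both arguments use $S(0)=0$ in the same essential way.
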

\begin{proof}
 The proof closely follows that of \cite[Lemma 2.2]{win_volumefilling}:
 Boundedness of $u$ makes application of regularity theory possible, yielding $c_1>0$ such that 
\begin{equation}\label{bdnessinC2}
 \normm{C^{2+β,1+\f{β}2}(\Ombar\times[t,t+1])}{u} \le c_1 \quad \text{and}\quad \norm[C^{2+β}(\Ombar)]{v(\cdot,t)}\le c_1 
\end{equation}
for every $t>0$. Due to Arzel\`a--Ascoli's theorem and $\int_0^\infty \calD(u(\cdot,t),v(\cdot,t))dt<\infty$, which is a result of an integration of \eqref{ddtF} and \eqref{bdnessinC2}, we can extract a sequence $(t_k)_{k\inℕ}\nearrow \infty$ such that $\calD(u(\cdot,t_k),v(\cdot,t_k))\to 0$ and $u(\cdot,t_k)\to u_\infty$, $v(\cdot,t_k)\to v_\infty$ in $C^2(\Ombar)$ as $k\to\infty$. 
Apart from \begin{equation}\label{limiteq} D(u_\infty)\na u_\infty = S(u_\infty)\na v_\infty,\end{equation} the properties asserted in \eqref{largetimeprops} immediately follow from the convergence in $C^2(\Ombar)$. In order to show that \eqref{limiteq} holds, we fix $x\in \Om$. If $u_\infty(x)=0$ then also $\nabla u_\infty(x)=0$ due to the nonnegativity of $u_\infty$, so that $S(0)=0$ ensures that \eqref{limiteq} holds in $x$. 
We have chosen the subsequence such that $\calD(u(\cdot,t_k),v(\cdot,t_k))\to 0$. 
Hence for almost every $x\in \Om$ with $u_\infty(x)\neq0$ by \eqref{condonDandS} we have $\liminf_{k\to\infty} S(u(x,t_k))>0$ and $S(u(x,t_k)) |\f{D(u(x,t_k))}{S(u(x,t_k))}\na u(x,t_k) - \na v(x,t_k)|^2 \to 0$, which shows that $\f{D(u_\infty(x))}{S(u_\infty(x))}\na u_\infty(x) - \na v_\infty(x)=0$ and thus asserts that \eqref{limiteq} holds almost everywhere in $\Om$ and -- by virtue of $u_\infty, v_\infty\in C^2(\Ombar)$ -- in all of $\Om$. 
\end{proof}

%

On the other hand, it is impossible to achieve arbitrarily low values of $\calF$ during convergence as observed in Lemma \ref{lem:largetime}.

\begin{lemma}\label{lem:steadyenergygeq}
 Let $N\ge 3$. Then for any $M>0$, $K>0$, $s_0\ge 1$, $δ>0$ and $R>0$ there is $C>0$ such that whenever $S$, $D$ satisfy \eqref{condonDandS} as well as \eqref{eq:winvolumefilling2.11}, 
then every radially symmetric solution to 
\[
 \io u_\infty=M,\quad -\Delta v_\infty+v_\infty=u_\infty,\quad \delny v_\infty\bdry=0,\quad D(u_\infty)\na u_\infty = S(u_\infty)\na v_\infty
\]
in $B_R\sub ℝ^N$ satisfies 
\[
 \calF(u_\infty,v_\infty)>-C. 
\]
\end{lemma}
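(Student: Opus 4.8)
The plan is to derive a lower bound for $\calF(u_\infty,v_\infty)$ that is uniform over all radially symmetric steady states with fixed mass $M$, by exploiting the coupling $D(u_\infty)\na u_\infty=S(u_\infty)\na v_\infty$ to turn $\calF$ into an expression controlled by the ODE structure and the structural condition \eqref{eq:winvolumefilling2.11}. First I would use Lemma \ref{lem:Fsimplified} to write $\calF(u_\infty,v_\infty)=\io G(u_\infty)-\f12\io u_\infty v_\infty$, and then use the elliptic equation $-\Delta v_\infty+v_\infty=u_\infty$ to rewrite $\io u_\infty v_\infty=\io|\na v_\infty|^2+\io v_\infty^2$. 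On the set where $u_\infty>0$, the identity $\na G'(u_\infty)=\f{D(u_\infty)}{S(u_\infty)}\na u_\infty=\na v_\infty$ shows that $v_\infty=G'(u_\infty)+$const on each connected component; since the solution is radial and (by standard ODE arguments, using $S(0)=0$) either $u_\infty$ is everywhere positive or identically handled on its positivity set, we can essentially reduce to the relation $v_\infty = G'(u_\infty)+c$ with a constant determined by the mass constraint.

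Next I would pass to radial coordinates and reformulate everything in terms of the cumulated-mass-type variable or directly in terms of $s=r^N$ (or an integration against $r^{N-1}$), so that the structural inequality \eqref{eq:winvolumefilling2.11}, which compares $\int\f{\tau D(\tau)}{S(\tau)}d\tau$ with $\f{N-2-\delta}N\iint\f{D(\tau)}{S(\tau)}$ plus a linear term, becomes applicable. The point of \eqref{eq:winvolumefilling2.11} is precisely that it controls the "moment-type" quantity $\int u_\infty\, G'(u_\infty)$ by $\int G(u_\infty)$ with a factor strictly less than $\f{N-2}N$, which, combined with a Pohozaev/Rellich-type identity for the radial elliptic equation $-\Delta v_\infty+v_\infty=u_\infty$ on $B_R$ (the one that produces the critical constant $\f{N-2}N$), yields an a priori bound preventing $\io G(u_\infty)$ — and hence $\calF$ — from going to $-\infty$. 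Concretely, I expect to multiply the elliptic equation by $x\cdot\na v_\infty$, integrate over $B_R$, use radial symmetry and the Neumann condition to handle boundary terms, substitute $u_\infty=v_\infty-\Delta v_\infty$ and $\na v_\infty=\na G'(u_\infty)$ where $u_\infty>0$, and arrive at an inequality of the form $\io G(u_\infty)\le c\,(\io u_\infty v_\infty) \le c'(M)$, with the gap $\delta>0$ absorbing error terms and the mass bound $\io u_\infty=M$ (together with the Neumann elliptic estimate $\norm[\Lom1]{v_\infty}\le M$) controlling the linear "$Ks$" contributions.

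From such a bound on $\io G(u_\infty)$ and $\io u_\infty v_\infty$, the conclusion $\calF(u_\infty,v_\infty)=\io G(u_\infty)-\f12\io u_\infty v_\infty>-C$ follows by bounding $\io u_\infty v_\infty$ from above: since $v_\infty=G'(u_\infty)+c$ on the positivity set and $G'$ is monotone, $\io u_\infty v_\infty$ is comparable to $\io u_\infty G'(u_\infty)$, which \eqref{eq:winvolumefilling2.11} again controls by $\io G(u_\infty)$ plus a constant depending on $M$; feeding this back gives a bound of the form $\calF(u_\infty,v_\infty)\ge -C(M,K,s_0,\delta,R)$. I anticipate that the main obstacle will be the bookkeeping around the set $\{u_\infty=0\}$ and the additive constant $c$ in the relation $v_\infty=G'(u_\infty)+c$: one must verify, using radial symmetry and the ODE for $u_\infty$, that $u_\infty$ cannot touch zero in the interior (or, if it does, that the argument localizes cleanly to the positivity region without losing the mass constraint), and that the constant $c$ is controlled in terms of $M$ and $R$. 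The second delicate point is ensuring that the critical constant $\f{N-2}N$ emerging from the Pohozaev identity matches the $\f{N-2-\delta}N$ in \eqref{eq:winvolumefilling2.11} with the right sign so that the $\delta$-gap genuinely buys coercivity rather than being overwhelmed by the zeroth-order terms $\io v_\infty^2$ and the $Ks$ correction; this is where the restriction $N\ge 3$ (so that $\f{N-2}N>0$) is essential.
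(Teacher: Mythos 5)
The paper does not actually prove this lemma: its ``proof'' is the single sentence that the statement is \cite[Lemma 3.4]{win_volumefilling}, whose (explicitly ``lengthy'') argument is exactly what you are trying to reconstruct, so there is nothing in-paper to compare against except your sketch's internal soundness. Your outline does contain the correct backbone --- reduce via Lemma \ref{lem:Fsimplified} to $\calF(u_\infty,v_\infty)=\io G(u_\infty)-\f12\io u_\infty v_\infty$, exploit $\na G'(u_\infty)=\f{D(u_\infty)}{S(u_\infty)}\na u_\infty=\na v_\infty$ to get $v_\infty=G'(u_\infty)+c$ on the positivity set, and use \eqref{eq:winvolumefilling2.11} to dominate $\io u_\infty G'(u_\infty)$ by a multiple of $\io G(u_\infty)$ --- but the mechanism you propose for bringing in the constant $\f{N-2}N$ is off. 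No Pohozaev/Rellich identity (multiplication by $x\cdot\na v_\infty$) is needed, and it is not clear it would produce anything useful here: the constant enters purely one-dimensionally, since $\f{d}{ds}\bigl(sG'(s)-G(s)\bigr)=s\f{D(s)}{S(s)}$ gives $\int_{s_0}^s\f{\tau D(\tau)}{S(\tau)}d\tau=sG'(s)-G(s)+\mathrm{const}$ and $\int_{s_0}^s\int_{s_0}^{\sigma}\f{D(\tau)}{S(\tau)}d\tau d\sigma=G(s)+(\text{affine in }s)$, so that \eqref{eq:winvolumefilling2.11} is equivalent to $sG'(s)\le\f{2N-2-\delta}{N}G(s)+\widetilde Ks+\widetilde c$ for $s\ge s_0$. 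Because $\f{2N-2-\delta}{N}<2$ and $G\ge0$ (both factors in \eqref{defG} change sign together at $\xi=1$), this is precisely the coercivity gap that lets $\f12\io u_\infty G'(u_\infty)$ be absorbed into $\io G(u_\infty)$; no $N$-dimensional integral identity is involved.

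The more serious problem is that the steps you yourself label ``the main obstacles'' are not side issues but the actual content of the proof, and your sketch defers rather than resolves them. Concretely: (i) the lemma demands a single constant $C$ valid for \emph{all} $D$, $S$ in the admissible class, whereas quantities your argument leans on --- the additive constant $c$ in $v_\infty=G'(u_\infty)+c$, the coefficients $\widetilde K$, $\widetilde c$ above (which involve $G'(s_0)$ and $G(s_0)$), and pointwise bounds on $v_\infty$ --- are not uniformly controlled by $M$, $K$, $s_0$, $\delta$, $R$ without further work; (ii) on $\{u_\infty<s_0\}$ the inequality \eqref{eq:winvolumefilling2.11} gives no information, and there $G'(u_\infty)$ may diverge to $-\infty$ much faster than $G(u_\infty)$ diverges to $+\infty$ (for $D/S\sim\xi^{-k}$ near $0$ one has $-G'(u)\sim u^{1-k}$ versus $G(u)\sim u^{2-k}$), so the claim that ``$\io u_\infty v_\infty$ is comparable to $\io u_\infty G'(u_\infty)$, which is controlled by $\io G(u_\infty)$'' does not follow pointwise and requires the sign information $v_\infty\ge0$ together with a careful evaluation of $c$. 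Until these quantitative steps are carried out the proposal is a programme rather than a proof --- which is presumably why the author chose to cite Winkler's lemma instead of reproving it.
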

\begin{proof}
 This is \cite[Lemma 3.4]{win_volumefilling}. Due to its length we refrain from repeating the proof. 
\end{proof}

In combination, the previous lemmata mean that 

\begin{lemma}\label{lem:unbd}
Let $\Om=B_R$ for some $R>0$. Let $D$ and $S$ satisfy \eqref{condonDandS} and $S(0)=0$. Assume that furthermore \eqref{eq:winvolumefilling2.11} is satisfied with some $s_0\ge 1$, $K>0$, $δ>0$. Then there is $C>0$ with the following property: If $u_0\in C^{β}(\Ombar)$ is radially symmetric and such that 
\[
 \calF(u_0,v_0)<-C
\]
 holds for the function $v_0\in C^2(\Ombar)$ defined by 
\begin{equation}\label{diffeqforinitdata}
 0=\Delta v_0 - v_0 + u_0, \qquad \delny v_0\bdry=0, 
\end{equation}
then the corresponding solution is not globally bounded, i.e. blows up, either after finite or in infinite time. 
\end{lemma}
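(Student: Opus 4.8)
The statement is essentially a combination of Lemma~\ref{lem:Fdecr}, Lemma~\ref{lem:largetime}, and Lemma~\ref{lem:steadyenergygeq}, so the plan is to argue by contradiction. Fix $C>0$ to be the constant furnished by Lemma~\ref{lem:steadyenergygeq} applied with $M:=\io u_0$, $R$, $s_0$, $δ$ as given (and $K$ being the constant from \eqref{eq:winvolumefilling2.11}); this is the $C$ that will appear in the statement. Now suppose, for contradiction, that $u_0\in C^{β}(\Ombar)$ is radially symmetric with $\calF(u_0,v_0)<-C$ for $v_0$ solving \eqref{diffeqforinitdata}, but that the corresponding solution $(u,v)$ of \eqref{eq:system} is global and bounded in the sense $\sup_{t>0}\norm[\Lom\infty]{u(\cdot,t)}<\infty$.

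First I would note that, by Lemma~\ref{lem:locex}, $(u,v)$ is a classical solution on $(0,\Tmax)$, and boundedness together with the extensibility criterion \eqref{extcrit} forces $\Tmax=\infty$; moreover $u$ is nonnegative and, by the uniqueness of the weak solution to the elliptic problem \eqref{veq}, the function $v(\cdot,0)$ coincides with the $v_0$ defined in \eqref{diffeqforinitdata}. Next, by Lemma~\ref{lem:Fdecr} the energy $t\mapsto\calF(u(\cdot,t),v(\cdot,t))$ is nonincreasing on $(0,\infty)$ (the dissipation $\calD\ge0$ since $S(u)\ge0$), and by continuity of $\calF$ on $C^0(\Ombar)\times C^1(\Ombar)$ together with $u\in C^0(\Ombar\times[0,\infty))$, $v\in C^0([0,\infty),C^1(\Ombar))$ one has $\calF(u(\cdot,t),v(\cdot,t))\to\calF(u_0,v_0)$ as $t\searrow0$; hence
\[
 \calF(u(\cdot,t),v(\cdot,t))\le \calF(u_0,v_0)<-C \qquad\text{for all }t>0.
\]
Then I would invoke Lemma~\ref{lem:largetime}: boundedness of $u$ yields a sequence $t_k\nearrow\infty$ and a pair $(u_\infty,v_\infty)\in(C^2(\Ombar))^2$ with $u(\cdot,t_k)\to u_\infty$, $v(\cdot,t_k)\to v_\infty$ in $C^2(\Ombar)$, satisfying \eqref{largetimeprops}; in particular $\io u_\infty=\io u_0=M$, and since $u_0$ is radially symmetric, so is $(u_\infty,v_\infty)$. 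By $C^2$-convergence (which in particular gives $C^1$-convergence of $v(\cdot,t_k)$, enough for continuity of $\calF$), $\calF(u(\cdot,t_k),v(\cdot,t_k))\to\calF(u_\infty,v_\infty)$, so $\calF(u_\infty,v_\infty)\le\calF(u_0,v_0)<-C$. But $(u_\infty,v_\infty)$ is exactly a radially symmetric solution of the stationary system in Lemma~\ref{lem:steadyenergygeq} with mass $M$, so that lemma gives $\calF(u_\infty,v_\infty)>-C$ — a contradiction. Hence the solution cannot be global and bounded: it either ceases to exist in finite time (blow-up in finite time, via \eqref{extcrit}) or exists globally but is unbounded (blow-up in infinite time).

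The only delicate point I anticipate is the right-continuity of the energy at $t=0$, i.e.\ matching $\calF(u(\cdot,t),v(\cdot,t))$ to $\calF(u_0,v_0)$ as $t\searrow0$: one must use that $u\in C^0(\Ombar\times[0,\Tmax))$ and $v\in C^0([0,\Tmax),C^1(\Ombar))$ from Lemma~\ref{lem:locex} (this is precisely where the stronger regularity of $v$ up to $t=0$ asserted there is needed), together with the fact that $\calF$ as defined in \eqref{defF} is continuous with respect to the $C^0(\Ombar)\times C^1(\Ombar)$ topology — the terms $\io|\na v|^2$, $\io v^2$, $\io uv$ are manifestly continuous, and $\io G(u)$ is continuous because $G$ is continuous on $(0,\infty)$ and $u_0$, being a uniform limit, stays in a fixed compact subset of $[0,\infty)$ on which $G$ is bounded (using that $G$ extends continuously, or at least stays bounded near $0$, which follows from \eqref{Gleq} if $2+α\ge0$; more carefully, one may instead use the simplified form \eqref{F:simplified} from Lemma~\ref{lem:Fsimplified} valid since \eqref{veq} holds, reducing matters to continuity of $\io G(u)-\tfrac12\io uv$). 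Everything else is a direct concatenation of the cited lemmata.
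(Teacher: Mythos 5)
Your proposal is correct and follows essentially the same route as the paper: identify $v(\cdot,0)$ with the elliptic solution $v_0$, use the monotonicity of $\calF$ from Lemma \ref{lem:Fdecr} together with its continuity up to $t=0$ (the point where the strengthened regularity in Lemma \ref{lem:locex} is needed), and derive a contradiction by combining Lemma \ref{lem:largetime} with the lower bound of Lemma \ref{lem:steadyenergygeq}. Your explicit discussion of the right-continuity of the energy at $t=0$ is a welcome elaboration of a step the paper only sketches.
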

\begin{proof}
Part of Lemma \ref{lem:locex} ensures that the map $\varphi\colon t\mapsto \calF(u(\cdot,t),v(\cdot,t))$  belongs to $C^0([0,\Tmax))$. That \eqref{veq} is satisfied, together with the regularity of $(u,v)$ asserted in Lemma \ref{lem:locex}, serves to show \eqref{diffeqforinitdata} with $v_0:=(C^1(\Ombar)-\lim)_{t\searrow 0} v(\cdot,t)$, firstly in a weak sense, then, by elliptic theory, even classically. According to Lemma \ref{lem:Fdecr}, $\varphi$ is decreasing. Assuming global boundedness of $(u,v)$, the use of Lemma \ref{lem:largetime} leads to $\calF(u_0,v_0)\ge -C$ by 
\ref{lem:steadyenergygeq} (with $C$ as given there). 
\end{proof}

\section{Constructing initial data and estimating $\calF$}\label{sec:construct}

Now that we have established that initial data ``with sufficiently negative energy'' lead to unbounded solutions, what remains to be shown is that such initial data, in fact, do exist and, even more, that there are many of these in any neighbourhood of given initial data. The difficulty, if compared to previous studies of the parabolic-parabolic model, is that $v_0$ can no longer be chosen arbitrarily, but has to fit with $u_0$; this can already be seen from the statement of Lemma \ref{lem:unbd}. 

The goal of this section is to construct one family of functions that causes arbitrarily negative values of $\calF$ if a parameter tends to zero. 
We will later add these functions to given initial data in order to find many nearby initial data that yield blow-up solutions. 

All functions in this section will be radially symmetric; as usual, we will identify radial functions $u\colon \Om=B_R\to ℝ$ and $\utilde\colon[0,R)\to ℝ$ if $u(x)=\utilde(|x|)$, $x\in \Om$, and will use the same symbol $u$ to denote both of these functions. 

We fix $γ>0$ and $δ\in(0,1)$, both of which will be subject to further conditions later, see \eqref{choice:gamma}, \eqref{choice:delta}.
For any $η\in(0,1)$ we let $\re:=η^{δ}$, define the nonnegative Lipschitz-continuous function 
\begin{equation}\label{def:ueta}
 \ue(r) := \begin{cases}
  (r^2+η^2)^{-\f{γ}2}-(\re^2+η^2)^{-\f{γ}2},&r\le \re,\\
  0,& r>\re,
 \end{cases}
\end{equation}
and let $\ve$ be the corresponding solution of 

\begin{equation}\label{def:veta}
 -Δ\ve +\ve =\ue\quad\text{in } \Omega, \qquad \delny \ve\bdry=0,  
\end{equation}
that is, 
\begin{equation}\label{eq:v-radial}
 -r^{1-N}(r^{N-1}{\ve}_r)_r = \ue-\ve\qquad \text{in } (0,R)\qquad \text{with } \ve_r(0)=0,\quad \ve_r(R)=0, 
\end{equation}
where $\ve_r(R)=0$ results from the Neumann boundary condition in \eqref{def:veta} and $\ve_r(0)=0$ is a consequence of the radial symmetry of $\ve$, which in turn is implied by radial symmetry of $\ue$ and uniqueness of solutions to \eqref{def:veta}. 

\subsection{Representation of $\ve$}\label{subsect:repr_v}
Let us first derive a representation formula for $\ve$, on which all estimates will be based. 

Integration of \eqref{eq:v-radial} shows that 
\[
 r^{N-1}\ve_r=\int_0^r s^{N-1}\ve(s)ds - \int_0^r s^{N-1}\ue(s)ds
\]
and hence, due to $\ve_r(0)=0$ 
\[
 \ve_r(r)=r^{1-N}\int_0^rs^{N-1}\ve(s)ds - r^{1-N}\int_0^r s^{N-1} \ue(s) ds,
\]
which by another integration is turned into 
\[
 \ve(r)=\ve(R)+\int_r^R s^{1-N} \int_0^s σ^{N-1}\ue(σ)dσds -\int_r^Rs^{1-N}\int_0^s σ^{N-1}\ve(σ) dσds.
\]

Here we can determine $\ve(R)$ from the fact that -- by integration of \eqref{def:veta} -- the $\Lom 1$-norms of $\ue$ and $\ve$ have to coincide. Using that hence 
\begin{align*}
 \f1\omn \norm[\Lom1]{\ue}&=\int_0^R t^{N-1}v_\eta(t)dt \\
&= \f{R^{N}}{N} v_\eta(R)+\int_0^Rt^{N-1}\int_t^R s^{1-N} \int_0^s σ^{N-1}u_\eta(σ)dσdsdt \\
&\qquad - \int_0^Rt^{N-1}\int_t^R s^{1-N} \int_0^s σ^{N-1}v_\eta(σ)dσdsdt,
\end{align*}
we obtain the following representation for $\ve$: 
\begin{align}\label{repr:v}
 \ve(r)&=NR^{-N}\omn^{-1}\norm[\Lom1]{\ue} + NR^{-N}\int_0^Rt^{N-1}\int_t^R s^{1-N}\int_0^sσ^{N-1}\ve(σ)dσdsdt\nn 
\\&\qquad -NR^{-N}\int_0^Rt^{N-1}\int_t^Rs^{1-N}\int_0^sσ^{N-1}\ue(σ)dσdsdt + \int_r^R s^{1-N}\int_0^s σ^{N-1}\ue(σ)dσds \nn 
\\&\qquad - \int_r^Rs^{1-N}\int_0^s σ^{N-1}\ve(σ)dσds.
\end{align}

\subsection{Estimates of $\ve$ from above}\label{subsect:estabove}
Our aim is $\calF(\ue,\ve)\to -\infty$ as $η\to 0$. According to Lemma \ref{lem:Fsimplified}, $\calF(\ue,\ve)=\io G(\ue)-\f12\io \ue\ve$, so that we should prove largeness of $\io \ue\ve$. Estimates of $\ve$ from below would be beneficial to this purpose. Due to the last term in \eqref{repr:v}, which contains $-\ve$, we begin this search for such estimates with an attempt to estimate $\ve$ from above. 

Regardless of whether an estimate from above or below is desired, the first three terms on the right of \eqref{repr:v} have a negligible contribution to the size of $\ve(r)$ if $η$ is small, at least provided $γ<N$:
\begin{align}\label{uvl1est}
 \norm[\Lom1]{\ve}=\norm[\Lom1]{\ue}&= \omn\int_0^R r^{N-1}\ue(r)dr \nn\\
 &\le\omn\int_0^{\re}r^{N-1}(r^2+η^2)^{-\gh}dr\nn\\
&\le \omn\int_0^{\re} r^{N-1-γ}dr\nn\\
&= \f{\omn}{N-γ}\re^{N-γ}  
= \f{\omn}{N-γ}η^{δ(N-γ)},
\end{align}
where we have used the obvious estimate 
\[
 (r^2+η^2)^{-\gh} \le r^{-γ}.
\]

With this, 
\begin{align*}
 \int_0^Rt^{N-1}\int_t^Rs^{1-N}\int_0^sσ^{N-1}\ue(σ)dσdsdt&\le\int_0^Rt^{N-1}\int_t^Rs^{1-N}\int_0^Rσ^{N-1}\ue(σ)dσdsdt\\
 &=\f1{\omn} \norm[\Lom1]{\ue} \int_0^Rt^{N-1}\int_t^R s^{1-N}dsdt\\
 &\le \f1{\omn} \norm[\Lom1]{\ue} \frac1{N-2} \int_0^R t^{N-1}\cdot t^{2-N} dt\\
 &=\f1{2(N-2)\omn}\norm[\Lom1]{\ue} R^2\\
 &\le \f{R^2}{2(N-2)(N-γ)}η^{δ(N-γ)}.
\end{align*}

By the same calculation we also obtain 
\begin{align}\label{tripleintvest}
 \int_0^Rt^{N-1}\int_t^Rs^{1-N}\int_0^sσ^{N-1}\ve(σ)dσdsdt
 &\le \f{R^2}{2(N-2)(N-γ)}η^{δ(N-γ)}.
\end{align}

As to the term containing $\ue$ and two integrals, we consider the cases of small and slightly larger $r$ separately. For the sake of a unified form of the explicit computations, we assume $γ\neq 2$. For $r\le\re$ we then have
\begin{align*}
 \int_r^Rs^{1-N}\int_0^s σ^{N-1}\ue(σ)dσds&\le \int_r^Rs^{1-N}\int_0^{\min\set{s,\re}} σ^{N-1}(σ^2+η^2)^{-\gh} dσds\\
&\le \int_r^{\re}s^{1-N}\int_0^s σ^{N-1-γ}dσds + \int_{\re}^Rs^{1-N}\int_0^{\re} σ^{N-1-γ}dσds\\
&=\f1{N-γ}\int_r^{\re} s^{1-γ} ds +\f1{N-γ} \re^{N-γ} \int_{\re}^R s^{1-N} ds\\
&=\f1{(N-γ)(2-γ)} (\re^{2-γ}-r^{2-γ}) + \f1{(N-γ)(N-2)}\re^{N-γ} (\re^{2-N}-R^{2-N}),
\end{align*}
whereas in the case $r>\re$
\begin{align*}
 \int_r^Rs^{1-N}\int_0^s σ^{N-1}\ue(σ)dσds&\le \int_r^Rs^{1-N}\int_0^{\min\set{s,\re}} σ^{N-1}(σ^2+η^2)^{-\gh} dσds\\
&\le  \int_r^Rs^{1-N}\int_0^{\re} σ^{N-1-γ}dσds\\
&=\f1{N-γ} \re^{N-γ} \int_{r}^R s^{1-N} ds\\
&=\f1{(N-γ)(N-2)}\re^{N-γ} (r^{2-N}-R^{2-N})\\
&\le \f1{(N-γ)(N-2)} \re^{2-γ}.
\end{align*}
Combined, these estimates show that with some $c_1>0$ 
\begin{equation}\label{doubleintuleq}
 \int_r^Rs^{1-N}\int_0^s σ^{N-1}\ue(σ)dσds \le c_1 r^{2-γ}+c_1 \re^{2-γ}.
\end{equation}

We conclude from \eqref{repr:v} and nonnegativity of $\ue$ and $\ve$ that 
\begin{align}
 \ve(r)&\le NR^{-N}\omn^{-1}\norm[\Lom1]{\ue} + NR^{-N}\int_0^Rt^{N-1}\int_t^R s^{1-N}\int_0^sσ^{N-1}\ve(σ)dσdsdt\nn
\\&\qquad + \int_r^R s^{1-N}\int_0^s σ^{N-1}\ue(σ)dσds \nn
\intertext{ and may invoke \eqref{uvl1est}, \eqref{tripleintvest} and \eqref{doubleintuleq} to continue estimating }
 \ve(r)&\le cη^{δ(N-γ)} + cr^{2-γ} + c\re^{2-γ} \label{eq:veleq}
\end{align}
with some $c>0$.

\subsection{Estimates of $\ve$ from below}\label{subsect:estvbelow}
The pointwise upper estimate of $\ve$ that we have just obtained enables us to treat the last integral in \eqref{repr:v}. Namely, as long as $γ\neq 4$, we have 
\begin{align*}
 \int_r^R s^{1-N}& \int_0^s σ^{N-1}\ve(σ)dσds\\
&\le \kl{cη^{δ(N-γ)}+c\re^{2-γ}}\int_r^Rs^{1-N}\int_0^sσ^{N-1}dσds +c\int_r^Rs^{1-N}\int_0^s σ^{N-1}σ^{2-γ} dσds\\
 &=Cη^{δ(N-γ)}+C\re^{2-γ}+\f{c}{N+2-γ}\int_r^R s^{1-N+N+2-γ}ds \\
 &\le Cη^{δ(N-γ)}+C\re^{2-γ}+\f{c}{(N+2-γ)(4-γ)} \kl{R^{4-γ}-r^{4-γ}} \\
 &\le C+Cη^{δ(2-γ)} + C'r^{4-γ}.
\end{align*}
with $c$ as in \eqref{eq:veleq} and $C>0$, $C'>0$ chosen in the obvious way.

The next term to be estimated is 
\(
\int_r^R s^{1-N} \int_0^s \sigma^{N-1} \ue(\sigma)d\sigma ds
\). Apparently, this term is nonnegative, but since it is this term that has to cause the lower estimate of $\ve$ on which we want to rely in having $\io \ue\ve\to \infty$ as $η\to \infty$, mere nonnegativity would be insufficient. 

We treat the terms arising from the two summands in \eqref{def:ueta} separately and restrict the calculation to small values of $r$. 

Using that $(η^2+σ^2)^{-\gh}\ge (2η^2)^{-\gh}$ for any $σ\le η$, for $r\le η$ we obtain 
\begin{align*}
 \int_r^Rs^{1-N}\int_0^{\min\set{s,\re}} σ^{N-1}(σ^2+η^2)^{-\gh}dσds 
&\ge 2^{-\gh} \int_r^{η}s^{1-N}\int_0^s σ^{N-1} η^{-γ} dσds \\
&\ge \f{2^{-\gh}}{N} η^{-γ} \int_r^{η} sds \\
&\ge c_1η^{2-γ} - c_1r^2η^{-γ}, 
\end{align*}
where $c_1=2^{-\gh-1}N^{-1}$. 

Concerning the second term in \eqref{def:ueta}, for $r<\re$ we have 
\begin{align*}
 \int_r^R s^{1-N}\int_0^{\min\set{\re,s}} σ^{N-1}(\re^2+η^2)^{-\gh} dσds &\le \re^{-γ}\int_r^R s^{1-N} \int_0^{\min\set{s,\re}} σ^{N-1}dσds\\
 &=\re^{-γ}\int_r^{\re} s^{1-N}\int_0^s σ^{N-1} dσds + \re^{-γ}\int_{\re}^R s^{1-N} \int_0^{\re} σ^{N-1} dσds\\
 &= \frac{\re^{-γ}}{N}\int_r^{\re} sds + \frac{\re^{N-γ}}{N} \int_{\re}^R s^{1-N} ds \\
 &=\frac{\re^{-γ}}{2N}(\re^2-r^2) + \frac{\re^{N-γ}}{(N-2)N}(\re^{2-N}-R^{2-N})\\
 &\le \frac1{2N} \re^{2-γ} + \frac1{(N-2)N}\re^{2-γ} = \f1{2(N-2)}\re^{2-γ}. 
\end{align*}

Combining these two estimates, we see that for $r<η$  

\begin{align}\label{eq:iintuegeq}
 \int_r^R s^{1-N} \int_0^s σ^{N-1}\ue(σ)dσds 
&\ge \int_r^Rs^{1-N}\int_0^{\min\set{s,\re}} σ^{N-1}(σ^2+η^2)^{-\gh}dσds \nn\\
&- \int_r^R s^{1-N}\int_0^{\min\set{\re,s}} σ^{N-1}(\re^2+η^2)^{-\gh} dσds\nn\\
&\ge c_1η^{2-γ} - c_1r^2η^{-γ} - \f1{2(N-2)}\re^{2-γ}.
\end{align}

In conclusion, making use of \eqref{eq:veleq} and \eqref{eq:iintuegeq} we obtain a pointwise lower estimate for $\ve(r)$, for any $r<η$: 
\begin{align*}
 \ve(r)&\ge \int_r^R s^{1-N}\int_0^s σ^{N-1}\ue(σ)dσds\\
& -NR^{-N}\int_0^Rt^{N-1}\int_t^Rs^{1-N}\int_0^sσ^{N-1}\ue(σ)dσdsdt  
\\&\qquad - \int_r^Rs^{1-N}\int_0^s σ^{N-1}\ve(σ)dσds\\
&\ge c_1η^{2-γ} - c_1r^2η^{-γ} - \f1{2(N-2)}\re^{2-γ}
- NR^{-N}\left(\f{R^2}{2(N-2)(N-γ)}η^{δ(N-γ)}\right)
-(C+Cη^{δ(2-γ)} + C'r^{4-γ})\\
&\ge c_1η^{2-γ} - c_1r^2η^{-γ} - c_2 \re^{2-γ} - c_3 -c_4 r^{4-γ}
\end{align*}
for suitably chosen constants $c_1>0$, $c_2>0$, $c_3>0$, $c_4>0$.

\subsection{The estimate for $\io \ue\ve$}\label{subsect:iouvge}

We choose $a\in(0,1)$ such that $c_5:=\f{2^{-\gh}}{N} - \f{a^2}{N+2}>0$. Then applying the previously derived estimates we obtain 
\begin{align*}
 \frac1{\omn}\io \ue\ve &=\int_0^R r^{N-1} \ue(r)\ve(r)dr\\
&\ge \int_0^{aη} r^{N-1} \ue(r)\ve(r)dr \\
&\ge \int_0^{aη} r^{N-1} (r^2+η^2)^{-\gh} c_1η^{2-γ} dr 
-\int_0^{aη} r^{N-1} (r^2+η^2)^{-\gh} c_1 r^2 η^{-γ} dr \\
&-\int_0^{aη} r^{N-1} (r^2+η^2)^{-\gh} c_2\re^{2-γ} dr
-\int_0^{aη} r^{N-1} (r^2+η^2)^{-\gh} c_3 dr\\
&-\int_0^{aη} r^{N-1} (r^2+η^2)^{-\gh} c_4 r^{4-γ} dr
-\int_0^{aη} r^{N-1} (\re^2+η^2)^{-\gh} c_1 η^{2-γ} dr\\
&\ge  c_1 η^{2-γ} 2^{-\gh} \int_0^{aη} r^{N-1} η^{-γ} dr \\
&-c_1η^{-2γ}\int_0^{aη} r^{N-1}r^2 dr 
-c_2\re^{2-γ} η^{-γ} \int_0^{aη} r^{N-1} dr
-c_3 η^{-γ}\int_0^{aη} r^{N-1} dr\\
&-c_4 η^{-γ}\int_0^{aη} r^{N-1}r^{4-γ} dr
-c_1η^{2-γ} \re^{-γ} \int_0^{aη} r^{N-1} dr \\
&= 2^{-\gh}\frac{c_1}{N} η^{2-2γ} a^{N} η^{N} 
-\f{c_1}{N+2} η^{-2γ}(aη)^{N+2} 
-\f{c_2}{{N}} \re^{2-γ} η^{-γ} (aη)^{N} \\
&-\f{c_3}{N} η^{-γ} (aη)^{N}
-\f{c_4}{{N}+4-γ} η^{-γ} (aη)^{{N}+4-γ} - \f{c_1}{{N}}η^{2-γ}\re^{-γ}(aη)^{N}\\
&\ge c_1a^{N} (\f{2^{-\gh}}{N} - \f{a^2}{{N}+2}) η^{2-2γ+{N}} \\
&-\f{c_2}{N} η^{N-γ+(2-γ)δ}-\f{c_3}{N} η^{N-γ} 
- \f{c_4}{{N}+4-γ} η^{{N}+4-2γ} - \f{c_1}{N} η^{{N}+2-γ-γδ}
\end{align*}

For small values of $η$, the first of these terms dominates the others if $2-2γ+{N}$ is negative and 
\[
 2-2γ+{N} = \min\{2-2γ+{N}, N-γ+(2-γ)δ, N-γ, {N}+4-2γ, {N}+2-γ-γδ\}, 
\]
which is ensured if $γ>2$, since $δ<1$. 

We can therefore summarize the result of subsections \ref{subsect:estabove} -- \ref{subsect:iouvge} as follows: 

There are $η_0>0$ and $c_0>0$ such that for all $η\in(0,η_0)$: 
\begin{equation}\label{uevegeq}
 \io \ue\ve \ge c_0 η^{2-2γ+{N}}
\end{equation}

\section{An upper bound for the positive contribution to $\calF(\ue,\ve)$. Proof of Theorem \ref{thm:unbounded}}

Under the assumption \eqref{Gleq}, 
\begin{align}\label{Gueleq}
 \io G(\ue) &\le C_G|\Om| + C_G ω_{N}\int_0^{\re} r^{N-1}(r^2+η^2)^{-\gh (2+α)} dr\nn\\
& \le c_1 + c_1 η^{-γ(2+α)}\re^{N} = c_1 + c_1 η^{-γ(2+α) + Nδ}.
\end{align}

If we want the term in \eqref{uevegeq} to dominate that of \eqref{Gueleq}, we have to ensure that 
\[
 2-2γ+N < Nδ-γ(2+α). 
\]
The only remaining step then is to not just use $\ue$, but to approximate any given $u_0$ and to adjust arguments where necessary (in particular in \eqref{Gueleq}). We do this in the following Lemma: 

\begin{lemma}\label{lem:Funbounded}
 Assume, $S$ and $D$ are such that \eqref{condonDandS} and $G$ as defined in \eqref{defG} is such that \eqref{Gleq} is satisfied with some $α\inℝ$ and $C_G>0$. 
 If 
\begin{equation}\label{alge2durchn}
 -α>\f2N,
\end{equation}
the following holds:\\
 Let $p\in[1,\f{2N}{N+2})$ if $α\le -\f4{N+2}$ and $p\in[1,-\f{αN}2)$ if $α> -\f4{N+2}$. 
 Given radially symmetric $u_0\in C^{β}(\Ombar)$ for some $β>0$, 
 there are radially symmetric functions $\ue, \ve$ such that $0=\Delta \ve-\ve +\ue$, $\delny\ve\bdry=0$ for any $η\in(0,1)$, and 
\[
 \calF(\ue,\ve) \to -\infty \qquad \text{as } η\searrow 0
\]
 and 
\[
 \norm[\Lom p]{\ue-u_0} \to 0 \qquad \text{as } η\searrow 0.
\]
\end{lemma}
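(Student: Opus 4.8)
The plan is to take the family $\ue$, $\ve$ constructed in Section~\ref{sec:construct} and to combine it additively with a suitable smooth truncation/approximation of $u_0$, then to show that the sharply peaked part still forces $\calF\to-\infty$ while the perturbation of $u_0$ stays controlled in $\Lom p$. Concretely, I would choose the parameters $γ>2$ and $δ\in(0,1)$ so that all the inequalities needed in Section~\ref{sec:construct} hold, i.e. $γ<N$ (used in \eqref{uvl1est} etc.), $γ>2$ (so that the leading term in \eqref{uevegeq} dominates), and in addition the new condition $2-2γ+N<Nδ-γ(2+α)$ coming from comparing \eqref{uevegeq} and \eqref{Gueleq}; I would check that this system of inequalities on $(γ,δ)$ is nonempty precisely when $-α>\f2N$, which is where \eqref{alge2durchn} enters. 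A convenient way is to first pick $γ$ slightly larger than $2$ and then push $δ$ close to $1$; one verifies $Nδ-γ(2+α)>2-2γ+N$ reduces, as $δ\to1$ and $γ\to2$, to $N-2(2+α)>N-2$, i.e. $-α>1>\f2N$ when... — more carefully, one should keep $γ$ as a free parameter near its allowed range and optimize, so I expect a short lemma-internal computation identifying the admissible window for $(γ,δ)$ under \eqref{alge2durchn}.

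Next I would define the actual initial data. Since $u_0\in C^\beta(\Ombar)$ is radially symmetric and bounded, set $\uehat:=u_0+\ue$ (possibly after multiplying $\ue$ by a fixed cutoff supported near the origin, but $\ue$ is already supported in $B_{\re}$ with $\re=η^\delta\to0$, so no extra cutoff is needed), and let $\vehat$ solve $0=\Delta\vehat-\vehat+\uehat$ with Neumann data; by linearity $\vehat=v_0+\ve$ where $v_0$ solves the same equation with datum $u_0$. Then $\norm[\Lom p]{\uehat-u_0}=\norm[\Lom p]{\ue}$, and using $\ue(r)\le(r^2+η^2)^{-γ/2}$ on $B_{\re}$ one computes $\norm[\Lom p]{\ue}^p\le \omn\int_0^{\re}r^{N-1-pγ}\,dr$, which is finite and tends to $0$ as $η\searrow0$ provided $pγ<N$; since $γ$ can be taken arbitrarily close to $2$ (or more precisely within its admissible window), this holds for every $p<N/2$ when $α\le-\f4{N+2}$, and the sharper bound $p<-\f{αN}2$ in the other regime is exactly what the constraints on $γ$ then allow. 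This is the step where the precise $p$-ranges in the statement get pinned down, and I would present it as a direct estimate.

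Finally I would estimate $\calF(\uehat,\vehat)$ from above. Using Lemma~\ref{lem:Fsimplified}, $\calF(\uehat,\vehat)=\io G(\uehat)-\f12\io\uehat\vehat$. For the quadratic term, $\io\uehat\vehat=\io(u_0+\ue)(v_0+\ve)\ge \io\ue\ve -|\io u_0\ve|-|\io\ue v_0|-\io u_0v_0+\dots$; since $u_0$, $v_0$ are bounded and $\norm[\Lom1]{\ue}=\norm[\Lom1]{\ve}\to0$ by \eqref{uvl1est}, the cross terms and the $u_0v_0$ term are bounded, so $\io\uehat\vehat\ge \io\ue\ve - c\ge c_0η^{2-2γ+N}-c$. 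For the $G$-term I would use \eqref{Gleq}, the subadditivity-type bound $G(\uehat)=G(u_0+\ue)\le C_G(1+(u_0+\ue)^{2+α})$, and the elementary inequality $(a+b)^{2+α}\le c(a^{2+α}+b^{2+α})$ (valid for $a,b\ge0$; for $2+α\ge1$ by convexity, for $0<2+α<1$ by concavity, and one must note $2+α$ could be negative, in which case $G(\uehat)\le C_G(1+\min(u_0,\ue+u_0)^{2+α})$ needs a small separate remark bounding $G$ from above by a constant where $u_0>0$), to split $\io G(\uehat)\le c+c\io G$-type$(\ue)\le c+cη^{-γ(2+α)+Nδ}$ as in \eqref{Gueleq} up to harmless $u_0$-contributions. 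Combining, $\calF(\uehat,\vehat)\le c+cη^{-γ(2+α)+Nδ}-\f{c_0}2η^{2-2γ+N}$, and the chosen window for $(γ,δ)$ makes the negative term dominate, so $\calF(\uehat,\vehat)\to-\infty$. Renaming $\uehat$, $\vehat$ back to $\ue$, $\ve$ gives the statement. I expect the main obstacle to be the bookkeeping around the sign and size of $2+α$ in handling $G(u_0+\ue)$ — i.e. making the estimate \eqref{Gueleq} robust to adding the bounded function $u_0$ for all $α\inℝ$ — together with verifying that the parameter window for $(γ,δ)$ is nonempty under \eqref{alge2durchn} and yields exactly the advertised ranges of $p$; the rest is routine reuse of Section~\ref{sec:construct}.
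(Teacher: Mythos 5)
Your overall architecture coincides with the paper's: set $\uehat=u_0+\ue(+\dots)$, use linearity of the elliptic problem to get $\vehat=v_0+\ve(+\dots)$, bound $\norm[\Lom p]{\uehat-u_0}$ by explicit powers of $η$, lower-bound $\io\uehat\vehat$ by $\io\ue\ve\ge c_0η^{2-2γ+N}$, and upper-bound $\io G(\uehat)$ via \eqref{Gleq}. However, two steps as you describe them would fail. First, the parameter window for $γ$: you propose taking $γ$ ``slightly larger than $2$'', but for the lower bound $\io\ue\ve\ge c_0η^{2-2γ+N}$ to force $\calF\to-\infty$ the exponent $2-2γ+N$ must be \emph{negative}, i.e. $γ>\f{N+2}2$, which for $N\ge3$ is strictly larger than $2$. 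This is exactly what produces the restriction $p<\f{2N}{N+2}$ (not $p<\f N2$, as you claim in the regime $α\le-\f4{N+2}$): the $\Lom p$-smallness needs $γp<N$, and combining $γ>\max\{\f{N+2}2,\f2{-α}\}$ with $γ<\f Np$ yields precisely the two $p$-ranges in the statement. The paper chooses $γ\in(\f{N+2}2,N)\setminus\{2,4\}$ with $2<-γα$ and $\f Nγ>p$, and then $δ\in(0,1)$ with $2+(1-δ)N<-γα$; your $γ\to2$, $δ\to1$ heuristic leads to the spurious condition $-α>1$ and an incorrect $p$-range, so the ``short lemma-internal computation'' you defer is in fact the crux and must be carried out with the correct lower bound on $γ$.

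Second, the case $2+α<0$ (which \eqref{alge2durchn} permits for $N\ge3$) is not handled by your proposal. The bound \eqref{Gleq} reads $G(ζ)\le C_G(1+ζ^{2+α})$ and degenerates as $ζ\to0$; since $\uehat=u_0+\ue$ vanishes (or is arbitrarily small) wherever $u_0$ does and $|x|>\re$, the term $\io G(\uehat)$ is simply not controlled by the hypotheses --- for $u_0\equiv0$, nothing in \eqref{condonDandS} and \eqref{Gleq} even guarantees that $G$ stays bounded near $0$. Your suggested fix (``bounding $G$ by a constant where $u_0>0$'') does not touch the set where $u_0+\ue$ is small. The paper's remedy is to add the constant $η^q$ to both $\uehat$ and $\vehat$ (the equation is preserved by linearity), with $q>0$ so small that $2-2γ+N<q(2+α)$; then $\uehat\ge η^q$ gives $\io G(\uehat)\le C_G|\Om|(1+η^{q(2+α)})$, whose exponent is dominated by $2-2γ+N$. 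With these two repairs your argument matches the paper's proof.
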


\begin{proof}
Since $2<-Nα$ by \eqref{alge2durchn}, we can choose 
\begin{equation}\label{choice:gamma} 
γ\in\left(\f{N+2}2,N\right)\setminus\{2,4\} 
\end{equation}
 such that 
\begin{equation}\label{ichbrauchnochnegleichungsnummer}
 2<-γα.
\end{equation}
We can, moreover, make this choice in such a way that 
\begin{equation} \label{pkleineralsndg} 
\f {N}{γ}>p,
\end{equation}
because $\f Np>\max\{\f{N+2}2,\f{2}{-α}\}$ by the conditions on $p$. 
In light of \eqref{ichbrauchnochnegleichungsnummer}, it is possible to furthermore choose $δ\in(0,1)$ satisfying 
\begin{equation}\label{choice:delta}
 2+(1-δ)N<-γα
\end{equation}
so that, finally,  
\begin{equation}\label{condition:on:exponents}
 2-2γ+N < Nδ-γ(α+2)
\end{equation}
holds.

With $γ$ and $δ$ as chosen here, we now define $\ue$ according to \eqref{def:ueta} and $\ve$ by \eqref{def:veta}. 
We then pick a small number $q>0$ such that 
\begin{equation}\label{choice:q} 2-2γ+N<(α+2)q\end{equation}
 and define 
\[
 \uehat:=u_0+\ue+η^q. 
\]
(The last summand will only be needed if $α+2< 0$.) 
We let $v_0$ be the corresponding solution to the Neumann problem of $-Δv_0+v_0=u_0$ and define $\vehat=v_0+\ve+η^q$. By linearity of the elliptic equation, $\vehat$ then solves $-Δ\vehat+\vehat=\uehat$ and furthermore obeys $\delny \vehat\bdry=0$. 


We note that 
\begin{align*}
 \norm[\Lom p]{\uehat-u_0}^p&=\norm[\Lom p]{\ue+η^q}^p\le 2^p\omn\int_0^R r^{N-1}\ue^p(r)dr + 2^p|\Om|η^{pq}\\
 &\le2^p\omn\int_0^{\re}r^{N-1}(r^2+η^2)^{-\f{γp}2}dr+ 2^p|\Om|η^{pq}\\
&\le 2^p\omn\int_0^{\re} r^{N-1-γp}dr + 2^p|\Om|η^{pq}\\
&= \f{2^p\omn}{N-γp}\re^{N-γp} + 2^p|\Om|η^{pq}
= \f{2^p\omn}{N-γp}η^{δ(N-γp)} + 2^p|\Om|η^{pq} \to 0 \qquad \text{as } η\to 0, 
\end{align*}
due to $q>0$ and \eqref{pkleineralsndg}.

%
%
%
%
%
%
%
%

If $2+α<0$, then $\uehat\ge η^q$ together with \eqref{Gleq} ensures that 
\[
 \io G(\uehat) \le C_G\io \kl{1+ η^{q(2+α)}} \le C_G |\Om| \kl{1 + η^{q(2+α)}}. 
\]
If $2+α\ge 0$, then we use that with some constant $c_1>0$, $u_0(x)+η^q\le c_1$ for all $x\in \Om$ and $η\in(0,1)$ and employ the estimate 

\[
 G(u_0+\ue+η^q)\le C_G+2^{2+α}C_G(u_0+η^q)^{2+α} + 2^{2+α}C_G \ue^{2+α}\le c_2+c_2 \ue^{2+α}
\]
for suitable $c_2>0$, 
yielding 
\begin{align*}
 \io G(u_0+\ue+\eta^q)&\le c_2|\Om| + c_2 \io \ue^{2+α} \le c_2|\Om|+c_2\omn\int_0^{\re}r^{N-1} (r^2+η^2)^{-\gh\cdot(2+α)} dr\\
 &\le c_2|\Om|+ c_2\omn η^{-γ(2+α)} \int_0^{\re} r^{N-1}dr\\
 &\le c_3 + c_3η^{Nδ-γ(2+α)}
\end{align*}
with some $c_3>0$. 

Moreover, $\uehat\ge \ue$ and $\vehat\ge \ve$ and hence 
\[
 \io \uehat\vehat \ge \io \ue\ve \ge c_0 η^{2-2γ+N}
\]
by \eqref{uevegeq}. 
Therefore

\begin{align*}
 \calF(\uehat,\vehat)=\io G(u_0+\ue+η^q) - \f12 \io \uehat\vehat \le c_4 η^{q(2+α)}+ c_5 + c_3η^{Nδ-γ(2+α)} - \f {c_0}2 η^{2-2γ+N},
\end{align*}
where $c_4=C_G|\Om|$ and $c_5=\max\set{c_3,c_4}$. Due to \eqref{choice:gamma} and \eqref{choice:q}, the exponent in the last of these terms is negative and according to \eqref{condition:on:exponents}, it is also the smallest exponent. From negativity of its coefficient, we may immediately conclude 
\begin{equation}
 \calF(\uehat,\vehat)\to -\infty \qquad \text{as  } η\to 0.
\end{equation}
\end{proof}

Theorem \ref{thm:unbounded} now becomes a straightforward consequence: 

\begin{proof}[Proof of Theorem \ref{thm:unbounded}]
We combine Lemma \ref{lem:Funbounded} and Lemma \ref{lem:unbd}. 
\end{proof}

\section{Acknowledgement}
The author acknowledges support of the {\em Deutsche Forschungsgemeinschaft} within the project {\em Analysis of chemotactic cross-diffusion in complex frameworks}. 

{\footnotesize
\bibliographystyle{abbrv}

}

\end{document}